\documentclass[10pt]{amsart}
\usepackage[T1]{fontenc}
\usepackage{mathpazo}

\usepackage[bookmarks=true]{hyperref}       

\usepackage{amssymb,latexsym,amsmath,amscd}
\usepackage{mathtools}
\usepackage{commath}
\usepackage{xspace}
\usepackage{enumerate}
\usepackage{graphicx}
\usepackage{vmargin}
\usepackage{todonotes}

\usepackage{dcpic,pictexwd}
\usepackage{tcolorbox}
\usepackage{subcaption}

\DeclareMathOperator{\CR}{cr}

\vfuzz2pt
\hfuzz2pt

\hyphenation{hodo-graph hodo-graphs}

\theoremstyle{plain}
\newtheorem{theorem}{Theorem}[section]
\newtheorem*{theorem*}{Theorem}
\newtheorem{proposition}[theorem]{Proposition}
\newtheorem{corollary}[theorem]{Corollary}
\newtheorem{lemma}[theorem]{Lemma}

\theoremstyle{definition}
\newtheorem{definition}[theorem]{Definition}

\newtheorem{remark}[theorem]{Remark}
\newtheorem{example}[theorem]{Example}

\newcommand{\enm}[1]{\ensuremath{#1}}          
\newcommand{\cal}[1]{\mathcal{#1}}

\renewcommand{\bar}[1]{\overline{#1}}

\newcommand{\CC}{\enm{\mathbb{C}}}

\newcommand{\SF}{\enm{\mathbb{S}}}

\newcommand{\RR}{\enm{\mathbb{R}}}

\newcommand{\HH}{\enm{\mathbb{H}}}

\newcommand{\Cc}{\enm{\cal{C}}}

\renewcommand{\imath}{\sqrt{-1}}

\renewcommand{\phi}{\Phi}
\renewcommand{\theta}{\vartheta}
\renewcommand{\epsilon}{\varepsilon}

\def\8{\infty}

\def\PP{\mathbb{P}}

\def\Re{\mathop{\mathrm{Re}}}
\def\Im{\mathop{\mathrm{Im}}}

\usepackage{color}

\let\oldtocsubsection=\tocsubsection
\renewcommand{\tocsubsection}[2]{\hskip15pt\oldtocsubsection{#1}{#2}}

\begin{document}
\parskip1pt

\title{Minimal Surfaces via Complex Quaternions}

\author[A. Altavilla]{Amedeo Altavilla}\address{Dipartimento di Matematica,
  Universit\`a degli Studi di Bari Aldo Moro, via Edoardo Orabona, 4, 70125,
  Bari, Italia}\email{amedeo.altavilla@uniba.it}

\author[H.-P. Schr\"ocker]{Hans-Peter Schr\"ocker}\address{
University of Innsbruck, Department of Basic Sciences in Engineering Sciences,
Technikerstra\ss e 13, 6020 Innsbruck, Austria}
\email{hans-peter.schroecker@uibk.ac.at }

\author[Z. \v{S}\'ir]{Zbyn\v{e}k \v{S}\'ir}
\address{Charles University, Faculty of Mathematics and Physics, Sokolovsk\'a 83, Prague 186 75, Czech Republic}
\email{zbynek.sir@karlin.mff.cuni.cz}

\author[J. Vr\v{s}ek]{Jan Vr\v{s}ek}
\address{Department of Mathematics, Faculty of Applied Sciences, University of West Bohemia,
Univerzitn\'i 8, 301 00 Plze\v{n}, Czech Republic and
NTIS – New Technologies for the Information Society, Faculty of Applied Sciences, University of West Bohemia,
Univerzitn\'i 8, 301 00 Plze\v{n}, Czech Republic}
\email{vrsekjan@kma.zcu.cz}

\thanks{Amedeo Altavilla was partially supported by PRIN 2022MWPMAB - ``Interactions between Geometric Structures and Function Theories'' and by GNSAGA of INdAM}

\date{\today}

\subjclass[2020]{Primary: 53A10; Secondary: 16H05, 46S05, 65D17}
\keywords{Minimal surface, Pythagorean hodograph curve, complex quaternions, Weierstra\ss-Enneper representation, Enneper surface}
\begin{abstract}
  Minimal surfaces play a fundamental role in differential geometry, with applications spanning physics, material science, and geometric design. In this paper, we explore a novel quaternionic representation of minimal surfaces, drawing an analogy with the well-established theory of Pythagorean Hodograph (PH) curves. By exploiting the algebraic structure of complex quaternions, we introduce a new approach to generating minimal surfaces via quaternionic transformations. This method extends classical Weierstra\ss-Enneper representations and provides insights into the interplay between quaternionic analysis, PH curves, and minimal surface geometry. Additionally, we discuss the role of the Sylvester equation in this framework and demonstrate practical examples, including the construction of Enneper surface patches. The findings open new avenues in computational geometry and geometric modeling, bridging abstract algebraic structures with practical applications in CAD and computer graphics.
\end{abstract}

\maketitle

\section{Introduction}

Minimal surfaces have been studied for a long time and play an important role in differential geometry. The subject connects naturally with several areas of mathematics, including harmonic analysis, complex analysis, and the calculus of variations. A minimal surface in \(\mathbb{R}^3\) is one with zero mean curvature at every point, and this simple condition leads to many interesting and sometimes surprising geometric shapes. A key development in the theory is the Weierstraß–Enneper representation, which describes minimal surfaces using holomorphic functions that satisfy a specific isotropy condition. This approach has led to many explicit examples and has applications in fields such as geometric modeling and materials science~\cite{alarcon21,osserman,Forstneri2023}.

If \(X = X(u,v)\colon \mathbb{C} \simeq \mathbb{R}^{2} \to \mathbb{R}^{3}\) is an isothermal parameterization of a minimal surface, then the complex-valued function \(\Phi\colon \mathbb{C} \to \mathbb{C}^{3}\), defined for \(z = u + \imath v \in \mathbb{C}\) as
\begin{equation*}
\Phi(z) \coloneqq \frac{\partial X}{\partial u}(z) - \imath \frac{\partial X}{\partial v}(z),
\end{equation*}
is holomorphic and satisfies the isotropy condition
\[
\Phi_1(z)^2 + \Phi_2(z)^2 + \Phi_3(z)^2 \equiv 0.
\]
This condition can be addressed via the classical Weierstraß–Enneper factorization:
\begin{equation}
  \label{eq:weierstrass-enneper}
  \Phi = \frac{1}{2} \left(f(1 - g^2), \imath f(1 + g^2), 2fg\right),
\end{equation}
where \(f\) and \(g\) are a holomorphic and a meromorphic function, respectively, satisfying some compatibility condition.
Note that \(f\) and \(g\) can be recovered from $\Phi$ as
\[
  f = \Phi_1 - \imath \Phi_2,\quad
  g = \frac{\Phi_3}{\Phi_1 - \imath \Phi_2}.
\]

More recently, algebraic approaches to minimal surface theory have emerged. In particular, Odehnal~\cite{odehnal16} explored algebraic minimal surfaces and their symmetries, highlighting the fruitful interplay between algebra and geometry. These efforts parallel developments in the theory of Pythagorean Hodograph (PH) curves~\cite{farouki08}, which are polynomial or rational parametric curves \(\gamma\colon I \subset \mathbb{R} \to \mathbb{R}^{3}\) characterized by their hodographs satisfying the Pythagorean condition
\begin{equation}\label{PH}
x'(t)^2 + y'(t)^2 + z'(t)^2 = \sigma(t)^2,
\end{equation}
for some polynomial or rational function \(\sigma(t)\). Polynomial PH curves are notable for enabling a polynomial arc-length function and polynomial as well as rational PH curves have rational offsets. These are valuable features in applications such as CNC machining and robotics~\cite{arrizabalaga22,Farouki_2016,Farouki_1996,Nittler_2016,otto21,Schraeder_2014,Tsai_2001}.

These curves are usually constructed using quaternionic polynomials: Given \(A(t) = u(t) + v(t)i + p(t)j + q(t)k\), one defines the hodograph of a PH curve by
\[
\gamma'(t) = A(t) i A(t)^c,
\]
where \(A(t)^c \coloneqq u(t) - v(t)i - p(t)j - q(t)k\). This use of algebraic conjugation elegantly encodes spatial rotation and underlines the algebraic power of PH curve theory.

In this paper, we introduce a novel framework that unifies these themes through the lens of complex quaternions \(\mathbb{H}_{\mathbb{C}} \coloneqq \mathbb{H} \otimes \mathbb{C} \cong \mathbb{H} \oplus \imath \mathbb{H}\). Specifically, we demonstrate that any isothermally parametrized minimal surface can be constructed from a fixed null quaternion \(L = i + \imath j\) by conjugation with a complex quaternionic function \(\chi(z)\), as
\begin{equation}
  \label{eq:chichi}
  \Phi(z) = \chi(z) L \chi(z)^{-1}.
\end{equation}
This formula directly mirrors the quaternionic construction of PH curves and extends the analogy to a geometric-algebraic description of minimal surfaces.

This representation is rooted in the algebraic structure of the Sylvester equation over \(\mathbb{H}_\mathbb{C}\), which has been studied in the context of slice regular and semi-regular functions~\cite{altavillaLAA,altavillaCO}. It was shown that for any pair of complex-quaternionic vector functions \(\Theta(z), \Psi(z)\), if they satisfy
\[
\Theta_1(z)^2 + \Theta_2(z)^2 + \Theta_3(z)^2 = \Psi_1(z)^2 + \Psi_2(z)^2 + \Psi_3(z)^2,
\]
then there exists \(\chi(z)\) such that \(\Theta(z) = \chi(z) \Psi(z) \chi(z)^{-1}\).

Our contribution is twofold: First, we provide an algebraic framework for minimal surfaces based on this complex quaternionic formulation. Second, we show how this approach connects naturally to the theory of PH curves, allowing all minimal surfaces to be generated by algebraic ``rotations'' of a prototype vector with vanish complex norm, e.g. \(L\). This leads to a unified representation of minimal surfaces and novel constructions of rational minimal patches—most notably, Enneper-type surfaces defined via algebraic data and quaternionic polynomial preimages of degree one.

To the best of our knowledge, this quaternionic representation of minimal surfaces is new. It extends recent results~\cite{FAROUKI2022127439,Hao2020} and opens the door to a systematic algebraic theory that parallels the PH curve framework. In the following sections, we lay out the foundational tools, introduce the Sylvester equation, and demonstrate the construction of minimal surfaces through examples and applications in geometric modeling. After the technical Section~\ref{sec:toolbox} introducing the primary tools—quaternions, complex quaternions, PH curves, and minimal surfaces—we will explore the role of the Sylvester equation in the context of complex quaternions in Section~\ref{sec:sylvester}. We will then examine the construction of minimal surfaces via this quaternionic framework and provide concrete examples (Section~\ref{sec:minimal}). The quaternionic setting lends itself particularly to the construction of \emph{polynomial} and \emph{rational} minimal surfaces. While the former appear now and then in literature, not much seems to be known about the latter (cf. \cite{odehnal16} for some examples). In Section~\ref{sec:enneper-patches} we re-visit the famous Enneper surface and, more in the spirit of computer aided surface design, establish a general result concerning the reconstruction of Enneper surface patches given an orthogonal basis at the vertices of a parameter rectangle.

\section{Toolbox}
\label{sec:toolbox}

In this section we give a short introduction on the main tools we will require later. We state definitions, notations and relevant results on minimal surfaces and their Weierstra\ss-Enneper representation, on quaternions and complex quaternions, and on PH curves.

\subsection{Minimal Surfaces and the Weierstra\ss-Enneper Representation}
\label{sec:min}

For this part we refer mainly to~\cite{osserman} or, for a more recent account, to~\cite{alarcon21,Forstneri2023}.
Let $X=X(u,v)\colon U\subset\RR^{2}\to\RR^{3}$ be a parametric surface of class $\Cc^{\infty}$, and let $X_{u}, X_{v}$ denote the partial derivatives of $X$ with respect to $u$ and $v$, respectively. We recall that the first fundamental form of $X$ is defined as the $2\times 2$ matrix \[\begin{pmatrix} E&F\\F&G \end{pmatrix},\] where $E=\langle X_{u},X_{u}\rangle, F=\langle X_{u},X_{v}\rangle$ and $G=\langle X_{v},X_{v}\rangle$. A parametric surface $X\colon U\subset\RR^{2}\to\RR^{3}$ is said to be \emph{isothermal} if \[E=G>0\quad\text{and}\quad F=0.\]

Given the parametrization $X$, the unit normal vector to the surface
can be written as
\[
N=\frac{X_{u}\times X_{v}}{|X_{u}\times X_{v}|}.
\]
Knowing $N$, it is possible to define the symbols $e$, $f$ and $g$ as follows
\[
e \coloneqq -\langle X_{uu},N\rangle,\quad f \coloneqq -\langle X_{uv},N\rangle,\quad g \coloneqq -\langle X_{vv},N\rangle.
\]
They define the second fundamental form and the shape operator:
\[
\begin{pmatrix} e&f\\f&g \end{pmatrix},\quad
-\begin{pmatrix} e&f\\f&g \end{pmatrix}
\cdot \begin{pmatrix} E&F\\F&G \end{pmatrix}^{-1}.
\]
The eigenvalues $k_{1}$, $k_{2}$ of the shape operator are the two principal curvatures of the surface and define the Gaussian curvature $K = k_{1}k_{2}$ as well as the mean curvature $H=\frac{1}{2}(k_{1}+k_{2})$.

\begin{definition}
A smooth parametric surface in $\mathbb{R}^{3}$ is a minimal surface if its mean curvature $H$ vanishes at every point.
\end{definition}

For an isothermal parametric surface it holds that $X_{uu}+X_{vv}=(2EH) \cdot N$. Therefore, an isothermal $X$ represent a minimal surface if and only if $X$ is harmonic (i.e. in the kernel of the usual Laplacian). At this point it is natural to link the theory of minimal surfaces to that of harmonic and hence also to that of holomorphic functions~\cite{osserman}.

Given a parametric surface $X$, consider the complex function
\begin{equation}
  \label{eq:Rder}\Phi \coloneqq X_{u}-\imath X_{v}.
  \end{equation}
Then, we have that
$X$ is isothermal if and only if
\[\langle \Phi,\Phi\rangle=\Phi_{1}^{2}+\Phi_{2}^{2}+\Phi_{3}^{2} \equiv 0.\]
Moreover, if this condition is satisfied, then $X$ is minimal if and only if $\Phi$ is a holomorphic curve.
On the other hand, if $\Phi\colon U\subset\CC\to\CC^{3}$ is a holomorphic curve, and $\gamma$ is a simple curve from $z_{0}=u_{0}+\imath v_{0}$ to $z=u+\imath v$ in a simply connected component of $U$, then the map $X\colon U\subset\RR^{2}\to\RR^{3}$ defined as
\begin{equation}
  \label{eq:Rint}
  X(u,v) \coloneqq \mathbf c+\Re\Bigl(\int_{\gamma}\Phi(\omega) \dif\omega\Bigr),
\end{equation}
with fixed $\mathbf c\in\RR^3$, defines an isothermal minimal surface. The same is true if we take the imaginary part of the integral in place of the real part. Hence, the theory of minimal surfaces in $\RR^{3}$ is fully encoded in that of holomorphic curves in $\CC^{3}$ with null complex norm.

Traditionally, $\Phi$ is obtained from a meromorphic function $g$ and an analytic function $f$ such that \eqref{eq:weierstrass-enneper} is analytic \cite[Lemma~8.1]{osserman}. This is the famous Weierstra\ss-Enneper formula.

\subsection{Quaternions and Complex Quaternions}

Let $\HH \coloneqq \{q=q_{0}+q_{1}i+q_{2}j+q_{3}k \mid q_{0}, q_{1}, q_{2}, q_{3} \in\RR\}$ where
\begin{equation}
\label{eq:ijk}
i^{2}=j^{2}=k^{2}=-1,\quad ij=-ji=k
\end{equation}
is the usual algebra of real quaternions. Recall that $i$, $j$, and $k$ are not the only imaginary units in $\HH$, but there is a whole sphere of them:
\begin{equation*}
\SF \coloneqq \{q\in\HH\,|\,q^{2}=-1\}=\{q_{1}i+q_{2}j+q_{3}k\,|\,q_{1}^{2}+q_{2}^{2}+q_{3}^{2}=1\}.
\end{equation*}

Any quaternion $q=q_{0}+q_{1}i+q_{2}j+q_{3}k\in\HH$ can be split into its \emph{scalar} and \emph{vector} parts as $q=q_{0}+q_{v}$, where $q_{v}=q_{1}i+q_{2}j+q_{3}k$. With this notation, the product of two quaternions $q=q_0+q_v$ and $p=p_0+p_v$ can be written in the following geometric way:
\[
qp=q_{0}p_{0}-\langle q_{v},p_{v}\rangle+q_{0}p_{v}+p_{0}q_{v}+q_{v}\times p_{v},
\]
where $\langle \cdot,\cdot\rangle$ and $\times$ denote the usual Euclidean scalar and cross products in $\RR^{3}$, respectively. The Euclidean norm of $q$ can be computed as $|q|=\sqrt{qq^{c}}$ where the superscript ``$c$'' denotes the usual conjugation of quaternions. If $q\neq0$, then $q^{-1}=q^{c}/|q|^{2}$.

It is well known that given two quaternions $q=q_{0}+q_{v},p=p_{0}+p_{v}$, there exists $A\in\HH$ such that $q=ApA^{-1}$ if and only if
$q_{0}=p_{0}$ and $|q_{v}|=|p_{v}|$. It is possible to study these condition by means of the so called Sylvester equation $p\xi+\xi q=r$ (see~\cite{BOLOTNIKOV2015567,Bolotnikov2016} for a recent account on this topic).
In particular, given $A$ and $p$, the quaternion $ApA^{-1}$ has the same scalar part as $p$ and its vector part is rotated.
This description of spatial rotations has many computational advantages; details on this and its link to a representation of $\operatorname{SO}(3)$ can be found in several monographs, see e.g.~\cite{farouki08} (or even the  dedicated Wikipedia page \url{https://en.wikipedia.org/wiki/Quaternions_and_spatial_rotation}).

We now pass on to complex quaternions. The algebra of complex quaternions $\HH_{\CC}$ is the complexification of $\HH$, i.e. $\HH_{\CC}=\HH\otimes\CC \cong \HH \oplus \imath \HH$. Elements in $\HH_{\CC}$ have two natural representations underlying the double nature of complex and quaternionic numbers: An element $z\in\HH_{\CC}$
can be written as
\[
z=z_{0}+z_{1}i+z_{2}j+z_{3}k=z_{0}+z_{v}=q+\imath p,
\]
where $i,j,k$ satisfy the usual quaternionic rules \eqref{eq:ijk}, $z_0$, $z_1$, $z_2$, $z_3 \in \CC$ and $q = \Re(z) \coloneqq \Re(z_{0})+\Re(z_{1})i+\Re(z_{2})j+\Re(z_{3})k$, $p= \Im(z) \coloneqq \Im(z_{0})+\Im(z_{1})i+\Im(z_{2})j+\Im(z_{3})k\in\HH$. Therefore, if $z=z_{0}+z_{v}=q+\imath p$ and $w=w_{0}+w_{v}=s+\imath t$ are complex quaternions, then
\begin{equation*}
zw=z_{0}w_{0}-\langle z_{v},w_{v}\rangle+z_{0}w_{v}+w_{0}z_{v}+z_{v}\times w_{v}
\end{equation*}
where $\langle \cdot,\cdot\rangle$ and $\times$ denote the formal generalizations of the usual Euclidean scalar and cross products, i.e., $\langle z_{v},w_{v}\rangle=z_{1}w_{1}+z_{2}w_{2}+z_{3}w_{3}$ and $z_{v}\times w_{v}=(z_{2}w_{3}-z_{3}w_{2})i+(z_{3}w_{1}-z_{1}w_{3})j+(z_{1}w_{2}-z_{2}w_{1})k$.

If $z=z_{0}+z_{v}=q+\imath p\in\HH_{\CC}$, then we can define two natural conjugations coming from the complex and quaternionic structures: We set
\[
\bar z \coloneqq \overline{z_{0}}+\overline{z_{v}}=q-\imath p, \quad
z^{c} \coloneqq z_{0}-z_{v} = q^{c}+\imath p^{c}.
\]
With this, we have that $(\bar z)^{c}=\overline{(z^{c})}$. In particular, it is possible to define the \emph{complex} squared norm of $z\in\HH_{\CC}$ as
\[z^{s} \coloneqq zz^{c} = z_{0}^{2}+z_{v}^{s} = z_{0}^{2}+z_{1}^{2}+z_{2}^{2}+z_{3}^{2}\in\CC,\]
and hence, if $z^{s}\neq 0$, then $z^{-1}=z^{c}/z^{s}$. If $z \in \HH$, then $z^s = \vert z \vert^2$.

\subsection{PH Curves and Minimal Surfaces}

Let $I \subset \RR$ be an interval and $\gamma\colon I\to\mathbb{R}^{2}$, $t \mapsto (x(t),y(t))$ be a smooth parametric curve. As already said in the introduction, Pythagorean-hodograph (PH) curves are rational or polynomial curves whose parametric speed is a polynomial or rational function.

Planar PH curves, first introduced in \cite{farouki90c}, are hence characterized by the equation \[x'^2+y'^2=\sigma^2\] where the prime denotes derivatives with respect to $t$ and $\sigma$ is a polynomial or rational function. Its solutions over the ring of real polynomials are fully described as
\[x'=w(p^2-q^2), \quad y'=2wpq, \quad \sigma = w(p^2+q^2),\]
where $p$, $q$, $w$ are arbitrary polynomials, called the \emph{preimage}. Quite remarkably, a slight modification of this representation is related to the theory of polynomial minimal surfaces. In \cite{Hao2020} the isotropic curve $\Phi$ is constructed in the form
\begin{equation}
  \label{eq:farouki}
\Phi= w\begin{pmatrix}p^2-q^2\\
2pq\\
\imath(p^2+q^2),
\end{pmatrix}
\end{equation}
 where $p$, $q$, $w$ are arbitrary polynomials with real coefficients. In \cite{FAROUKI2022127439} the authors realize that the three polynomials $p$, $q$, $w$ may have complex coefficients. They also show that the resulting surface has a Pythagorean normal and that its parameterization is a PH preserving mapping provided that $w=1$.

 In the present paper, we will provide a full characterization of \emph{all polynomial and rational} minimal surfaces in isothermal parameterization. Moreover, they will appear as special cases of all minimal surfaces represented using a complex-quaternionic formalism. Pushing further the analogy with the PH curves,
 we will relate minimal surfaces rather to \emph{spatial PH curves}. These curves were first considered in \cite{farouki94a} using a three polynomial preimage, and later in \cite{choi02b}, an algebraic representation using quaternionic polynomials was proposed.
 The basic underlying idea is to consider a fixed unitary vector as a quaternion in $\mathbb{S}$, e.g. the quaternion $i$, and to rotate it via conjugation with a quaternionic polynomial $A(t)=a_0(t) + a_1(t){i} + a_2(t){j} + a_3(t){k}$. This will result in a function $A(t)iA(t)^{c}$ that represent the hodograph of a spatial PH curve.

 In view of recent results both polynomial and rational spatial PH curves can be represented in a unified way. The following result was proven for polynomial curves in \cite{Farouki2002} and for rational ones in \cite{SCHROCKER2023128214}.

\begin{theorem}
  \label{th:PH-curves}
  All spatial polynomial (resp. rational) PH curves $\gamma(t)$ are obtained by integration of
  \begin{equation}
    \label{eq:LAiA}
    \gamma'(t) =  \lambda(t)A(t) i A(t)^c,
  \end{equation}
  where $A(t)\in \HH[t]$ is a quaternion valued polynomial and $\lambda(t)$ is a polynomial (resp. rational) function. In the case of rational $\lambda(t)$ all the residues of \eqref{eq:LAiA} must be zero.
\end{theorem}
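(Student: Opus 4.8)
The plan is to prove the two implications separately and to deduce the rational case from the polynomial one.

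\emph{The parametrizations \eqref{eq:LAiA} are always PH.} This is a short direct computation. For any $A(t)\in\HH[t]$ the product $A^cA=AA^c=|A|^2$ is a real scalar polynomial, so $(AiA^c)(AiA^c)=A\,i\,|A|^2\,i\,A^c=-|A|^4$ and $(AiA^c)^c=A(-i)A^c=-AiA^c$, i.e.\ $AiA^c$ is a purely imaginary quaternion polynomial. Identifying $\RR^3$ with the imaginary quaternions and writing $\gamma'(t)=\lambda(t)\,AiA^c=x'i+y'j+z'k$, one gets
\[
  x'^2+y'^2+z'^2=\gamma'(\gamma')^c=\lambda^2|A|^4=\bigl(\lambda|A|^2\bigr)^2,
\]
which is \eqref{PH} with $\sigma=\lambda|A|^2$, a polynomial (resp.\ rational function). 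When $\lambda$ is merely rational, the antiderivative $\gamma$ of $\lambda AiA^c$ is itself rational if and only if the partial-fraction expansion of $\lambda AiA^c$ contains no simple-pole terms, i.e.\ all residues vanish---exactly the extra hypothesis in the statement.

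\emph{Every polynomial PH curve has the form \eqref{eq:LAiA}.} I would argue by induction on $\deg\sigma$, the hodograph being a purely imaginary $h\in\HH[t]$ with $hh^c=\sigma^2$, $\sigma\in\RR[t]$. First, every real root $r$ of $\sigma$ forces $|h(r)|^2=\sigma(r)^2=0$, hence $h(r)=0$, so $t-r$ divides $h$ in $\HH[t]$; peeling off all such real linear factors and putting them into $\lambda$, one reduces to the case where $\sigma^2$ is, up to a positive constant, a product of positive-definite quadratics. If $\deg\sigma=0$ then $h_1^2+h_2^2+h_3^2$ is constant, forcing $h$ to be a constant imaginary quaternion, which equals $AiA^c$ for some constant $A\in\HH$, since conjugating $i$ by unit quaternions sweeps out all unit imaginary quaternions. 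For the inductive step, fix a positive-definite quadratic factor $\pi(t)=(t-\alpha)(t-\bar\alpha)$, $\alpha\in\CC\sm\RR$, of $\sigma$ and write $\pi=|A_1|^2$ for a linear $A_1\in\HH[t]$. Since $\pi^2\mid hh^c$, evaluation at $\alpha$ gives $h(\alpha)h(\alpha)^c=0$, so $h(\alpha)$ lies on the null cone of $\HH_\CC$; if $h(\alpha)=0$ then $\pi$ (being central with distinct roots) divides $h$ and one recurses, and otherwise this null datum---together with the first-order condition coming from $\pi^2\mid hh^c$---pins down the correct choice of $A_1$ among the linear quaternion polynomials with $|A_1|^2=\pi$, namely one for which the conjugated hodograph $A_1^{-1}h\,(A_1^{-1})^c=A_1^c h A_1/\pi^2$ is again a purely imaginary \emph{polynomial} PH hodograph, now with $\sigma$-degree lowered by $2$. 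Iterating and absorbing the base-case constant quaternion into the last factor, one obtains $h=AiA^c$ with $A=A_1A_2\cdots A_m\in\HH[t]$, i.e.\ \eqref{eq:LAiA} once the real factors are reinstated in $\lambda$; this is the content of~\cite{Farouki2002}.

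\emph{Reduction of the rational case and the main obstacle.} If $\gamma$ is a rational PH curve, write its hodograph as $h=H/\delta$ with $H\in\HH[t]$ purely imaginary and $\delta\in\RR[t]$; then $hh^c=\sigma^2$ becomes $HH^c=(\delta\sigma)^2$, and since $HH^c\in\RR[t]$ and $\RR[t]$ is integrally closed in $\RR(t)$, $\delta\sigma$ is a polynomial, so $H$ is a \emph{polynomial} PH hodograph. By the polynomial case $H=\mu\,AiA^c$ with $\mu\in\RR[t]$, $A\in\HH[t]$, whence $h=(\mu/\delta)\,AiA^c$, i.e.\ \eqref{eq:LAiA} with the rational function $\lambda=\mu/\delta$; and $\gamma$ is rational precisely when all residues of $\lambda AiA^c$ vanish, by the argument of the first paragraph, recovering~\cite{SCHROCKER2023128214}. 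The genuine obstacle is the inductive step of the polynomial case---the existence of the quaternionic ``square root'' $A(t)$. Matching the \emph{norm} of a candidate $AiA^c$ to $\sigma^2$ is routine (every positive-definite real quadratic factors as $|A_1|^2$, and the remaining real factors are dumped into $\lambda$), but matching $AiA^c$ to $h$ itself amounts to lifting a rational rotation field through the two-to-one cover $\mathbb{S}^3\to\operatorname{SO}(3)$ to a spinor that is genuinely \emph{polynomial}, with the prescribed similarity classes of linear factors. Making this lift polynomial rather than merely rational---equivalently, controlling the non-unique factorization of $h$ in $\HH[t]$ so that similar irreducible factors pair up correctly---is the technical heart of the argument, and it rests precisely on the null-cone (Sylvester-equation) structure of $\HH_\CC$ developed in Section~\ref{sec:sylvester}.
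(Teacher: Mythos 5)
The paper itself does not prove Theorem~\ref{th:PH-curves}: it is imported from the literature, with \cite{Farouki2002} cited for the polynomial case and \cite{SCHROCKER2023128214} for the rational one, so there is no in-paper argument to compare against. Judged on its own, your proposal gets the easy half and the bookkeeping right: the computation showing that $\lambda A i A^c$ is purely vectorial with squared norm $(\lambda|A|^2)^2$ is complete and correct; the identification of ``all residues vanish'' with rationality of the antiderivative is correct; and the reduction of the rational case to the polynomial one---clearing a real denominator $\delta$, noting that $(\delta\sigma)^2 = HH^c$ is a polynomial and $\RR[t]$ is integrally closed, so $H=\delta\gamma'$ is a polynomial Pythagorean hodograph---is a clean and valid argument. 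The peeling of real linear factors and of central quadratic factors $\pi$ with $h(\alpha)=0$ into $\lambda$ is also fine.

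The gap is exactly where you locate it, and it is not closed. In the inductive step you must produce a \emph{linear} $A_1\in\HH[t]$ with $A_1A_1^c=\pi$ such that $A_1^c h A_1$ is divisible by $\pi^2$, i.e.\ $(A_1^c h A_1)(\alpha)=0$ \emph{and} $(A_1^c h A_1)'(\alpha)=0$; the existence of such an $A_1$ is the entire content of the hard direction, and you assert it (``pins down the correct choice'') rather than prove it. Moreover, pointing at the Sylvester-equation machinery of Section~\ref{sec:sylvester} does not supply the missing step: Proposition~\ref{prop:chichi} and Remark~\ref{remsol} produce \emph{meromorphic or rational} conjugating elements, and upgrading a rational preimage to a polynomial one is precisely the nontrivial extra work (compare the second half of the proof of Theorem~\ref{th:AA}, where a B\'ezout argument is needed even in the surface setting); the proofs in \cite{Farouki2002} and \cite{choi02b} in fact proceed by direct component computations rather than via $\HH_\CC$. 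As written, your text is a correct verification of sufficiency plus a correct reduction of the rational case to the polynomial case, but for the polynomial case it reduces the statement to the very reference it cites; to be self-contained you would need to prove existence of the linear factor $A_1=t-q$, e.g.\ by analyzing for which $q$ on the two-sphere determined by $q+q^c=\alpha+\bar\alpha$, $qq^c=\alpha\bar\alpha$ the quantity $(\alpha-q^c)\,h(\alpha)\,(\alpha-q)$ vanishes together with the required first-order condition.
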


We would like the reader to note the formal similarity between Equations~\eqref{eq:chichi} and \eqref{eq:LAiA}, in particular in the case where the quaternionic function $\chi(z)$ of \eqref{eq:chichi} is rational. We will make this analogy more explicit in Section~\ref{sec:minimal}.

It the study of PH curves it is natural to start with the lowest degree case. It comes from a quaternionic preimage $A(t)$ of degree one and $\lambda \equiv 1$. The following remark summarizes some known results but puts them into the context of our presentation.

\begin{remark}\label{th:PH3}
 Polynomial spatial PH \emph{cubics} can be characterized in terms of the legs of their control polygon, see \cite[Proposition 21.1]{farouki94a}, and are known to be curves of constant slope over the planar Tschirnhausen cubic. Indeed, with the exception of the cubically parameterized straight line, all spatial cubic polynomial PH curves are obtained by integrating \eqref{eq:LAiA} with a trivial real factor $\lambda(t)=1$ and a linear quaternionic preimage polynomial $A(t)=A_1t+A_0$. This system of parametric cubic PH curves has 8 degrees of freedom (4 free parameters for each of $A_1, A_0$). There are however following modifications of the preimage which will produce the same curve up to the isometries, scalings, and affine re-parameterizations:
  \begin{itemize}
    \item Multiplication $A(t)(\cos \alpha + i \sin \alpha)$ will not change the curve due to the stabilization of $i$,
      \item reparameterization $t\to (a_1 t + a_0)$ will produce a reparameterized and scaled PH curve,
      \item quaternion multiplication $R\, A(t)$ will rotate and scale the curve.
    \end{itemize}
    This family of preimage modification has 7 degrees of freedom ($\alpha, a_1, a_0$ and 4 coefficients of $R$ and will reduce the 8 dimensional family of cubics to 1 dimensional family considered up to the mentioned transformations. More precisely, it can be shown explicitly that any linear $A(t)$ can be modified to the form $t+i \sin \varphi + j \cos \varphi $ which produces, via formula \eqref{eq:LAiA}, a PH cubic of the constant slope $\varphi$ with the axis $(i \sin \varphi + j \cos \varphi)$ and its projection to a perpendicular plane is a Tschirnhausen cubic.
\end{remark}

The representations \eqref{eq:weierstrass-enneper} and \eqref{eq:farouki} are equivalent in the case of polynomial minimal surfaces. And in fact some analogy between this theory and that of PH curves has been used in the works~\cite{FAROUKI2022127439, KIM2008217, ueda, ueda2} to construct maps that preserve the PH condition: Given a planar PH curve $\gamma$, and a polynomial minimal surface $X$, then $X \circ \gamma$ is a PH curve contained in the surface.

Now, in order to obtain an algebraic representation for minimal surfaces like that in Formula~\eqref{eq:LAiA} we will make use of the algebra of complex quaternions $\HH_{\CC}$ and ideas from the theory of quaternionic function theory. In particular we will exploit results on the Sylvester equation applied to holomorphic curves with values in the algebra $\HH_{\CC}$.

\section{The Sylvester Equation}
\label{sec:sylvester}

In this section, we analyze the so-called \emph{Sylvester Equation} in $\HH_{\CC}$: Given $F$, $G$, $H \in \mathbb{H}_{\mathbb{C}}$, we seek all possible solutions $z \in \mathbb{H}_{\mathbb{C}}$ satisfying
\begin{equation*}
  F z + z G = B.
\end{equation*}
Depending on the context, this equation represents various important operators (see, e.g., \cite{Bhatia1997}). It has also been studied in the framework of quaternionic regular functions, where it arises in the theory of intrinsic holomorphic functions with values in $\mathbb{H}_{\mathbb{C}}$ \cite{altavillaCO,altavillaLAA}.
Here, we provide a concise discussion of this equation in the setting of \( \mathbb{H}_{\mathbb{C}} \).
Our main interest in this equation lies in the particular case in which $B=0$. The analysis of this case will give algebraic conditions on the existence of some $z$, such that $F=-zGz^{-1}$.

First of all, given $F$, $G\in\HH_{\CC}$, we define the Sylvester operator $\mathcal{S}_{F,G}\colon\HH_{\CC}\to\HH_{\CC}$ as $\mathcal{S}_{F,G}(z)=F z+zG$.  It is clear that, if one among $F_{v}$ or $G_{v}$ is equal to zero, then the operator becomes trivial. In fact, if  $F_{v}=0$, then $\mathcal{S}_{F,G}(z)=F z+zG=F_{0}z+zG=z(F_{0}+G)$.
The following result characterizes when $\mathcal{S}_{F,G}$ is non singular in the more general case in which $F_{v}$ and $G_{v}$ are both non-zero.

\begin{theorem}
Let $F=F_{0}+F_{v},G=G_{0}+G_{v}$ be
any pair of elements in $\HH_{\CC}\setminus\{0\}$ such that $F_{v}$, $G_{v}\neq 0$. Then, the Sylvester operator $\mathcal{S}_{F,G}$ is singular if and only if
\[
(F_{0}+G_{0})^{4}+2(F_{0}+G_{0})^{2}(F_{v}^{s}+G_{v}^{s})+(F_{v}^{s}-G_{v}^{s})^{2}=0.
\]
Moreover, the operator has rank $2$ if and only if $F_{0}+G_{0}=F_{v}^{s}-G_{v}^{s}=0$, while it has rank $3$
if and only if $0\neq F_0+G_0=\pm
  \imath(\sqrt{F_v^{s}}\pm\sqrt{G_v^{s}})$.
\end{theorem}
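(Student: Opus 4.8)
The plan is to diagonalise $\mathcal{S}_{F,G}$ as far as possible by using that it is the sum of two \emph{commuting} operators. Write $\mathcal{S}_{F,G}=L_F+R_G$, where $L_F(z)=Fz$ and $R_G(z)=zG$; these commute because $\HH_\CC$ is associative. Since $F^2=2F_0F-F^s$ in $\HH_\CC$ and $F_v\neq0$, the operator $L_F$ is annihilated by $t^2-2F_0t+F^s$ and is not a scalar, and likewise for $R_G$; the roots of these quadratics are $\alpha_\pm=F_0\pm\imath\sqrt{F_v^s}$ and $\beta_\pm=G_0\pm\imath\sqrt{G_v^s}$ (recall $F^s=F_0^2+F_v^s$).

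Next I would pass to the split form $\HH_\CC\cong M_2(\CC)$ and identify the regular bimodule $\HH_\CC$ with $\CC^2\otimes(\CC^2)^{*}$, so that left multiplications act on the first tensor factor and right multiplications on the second. Under this identification $\mathcal{S}_{F,G}$ becomes the Kronecker sum $\rho(F)\otimes I+I\otimes\rho(G)^{\top}$, where $\rho(F),\rho(G)\in M_2(\CC)$ have characteristic polynomials $t^2-2F_0t+F^s$ and $t^2-2G_0t+G^s$ (their reduced traces and norms being $2F_0,2G_0$ and $F^s,G^s$). The eigenvalues of a Kronecker sum are the pairwise sums of eigenvalues, so $\mathcal{S}_{F,G}$ has the four eigenvalues $\alpha_\epsilon+\beta_{\epsilon'}$, $\epsilon,\epsilon'\in\{+,-\}$, counted with multiplicity, and therefore
\[
\det\mathcal{S}_{F,G}=\prod_{\epsilon,\epsilon'}(\alpha_\epsilon+\beta_{\epsilon'})=(F_0+G_0)^4+2(F_0+G_0)^2(F_v^s+G_v^s)+(F_v^s-G_v^s)^2,
\]
the last equality being the routine expansion of $[(a+b)^2-c^2][(a-b)^2-c^2]$ with $a=F_0+G_0$, $b^2=-F_v^s$, $c^2=-G_v^s$. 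This proves the singularity criterion.

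For the rank I would put $\rho(F)$ and $\rho(G)^{\top}$ into Jordan form: if $F_v^s\neq0$ then $\rho(F)$ is diagonalisable with distinct eigenvalues $\alpha_\pm$, while if $F_v^s=0$ then, since $F_v\neq0$, $\rho(F)$ is a single $2\times2$ Jordan block with eigenvalue $F_0$; likewise for $G$. Thus $\mathcal{S}_{F,G}$ is conjugate to $J_F\otimes I+I\otimes J_G$, and the rank is read off in the three resulting cases. (i) Both $J_F,J_G$ diagonal: the Kronecker sum is diagonal with entries $\alpha_\epsilon+\beta_{\epsilon'}$, and using $\alpha_+\neq\alpha_-$ and $\beta_+\neq\beta_-$ one checks that at most two of them vanish, that two vanish exactly when $F_0+G_0=0$ and $F_v^s=G_v^s$ (rank $2$), and that exactly one vanishes precisely when $0\neq F_0+G_0=\pm\imath(\sqrt{F_v^s}\pm\sqrt{G_v^s})$ (rank $3$). (ii) Exactly one of $J_F,J_G$ is a Jordan block: then $\mathcal{S}_{F,G}$ is conjugate to two $2\times2$ Jordan blocks with the \emph{distinct} eigenvalues $F_0+\beta_\pm$ (resp.\ $\alpha_\pm+G_0$), so its rank is $4$ (no eigenvalue $0$) or $3$ (one of them $0$) but never $2$, and the rank is $3$ precisely when $0\neq F_0+G_0=\pm\imath\sqrt{G_v^s}$ (resp.\ $\pm\imath\sqrt{F_v^s}$). (iii) Both $J_F,J_G$ Jordan blocks, hence $F_v^s=G_v^s=0$: the resulting $4\times4$ matrix is upper triangular with diagonal $F_0+G_0$, so it is nonsingular unless $F_0+G_0=0$, in which case inspection of its columns shows the rank equals $2$ (and $F_v^s-G_v^s=0$ holds automatically). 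In every case the rank lies in $\{2,3,4\}$, and collating the cases yields the stated rank-$2$ and rank-$3$ characterisations.

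The step that is not mere bookkeeping is the reduction to a Kronecker sum — recognising that, after conjugation, $L_F$ and $R_G$ act independently on the two tensor factors — since it is precisely what turns a $4\times4$ determinant into a product of four linear factors and makes the Jordan analysis transparent; it rests on the classical splitting $\HH_\CC\cong M_2(\CC)$ and the identification $\mathrm{End}_\CC(\HH_\CC)\cong\HH_\CC\otimes\HH_\CC^{\mathrm{op}}$. If one prefers to avoid this, the same conclusions follow by writing $\mathcal{S}_{F,G}$ explicitly as a $4\times4$ complex matrix in the basis $\{1,i,j,k\}$, expanding the determinant to recover the quartic, and then, in each singular regime, exhibiting a nonvanishing minor of the required size together with the vanishing of all larger ones; that route is elementary, but the rank computation is the heavier part.
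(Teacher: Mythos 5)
Your proof is correct, but it takes a genuinely different route from the paper. The paper works directly in the basis $\{1,i,j,k\}$: it rewrites $Fz+zG=0$ as a $4\times4$ complex linear system, displays the matrix $M$ explicitly, asserts its four eigenvalues $F_0+G_0\pm\imath(\sqrt{F_v^s}\pm\sqrt{G_v^s})$, and reads off the determinant as their product; the rank statement is then argued by counting how many of these eigenvalues vanish. You instead exploit the splitting $\HH_\CC\cong M_2(\CC)$ to identify $\mathcal{S}_{F,G}=L_F+R_G$ with the Kronecker sum $\rho(F)\otimes I+I\otimes\rho(G)^{\top}$, so the same four eigenvalues arise structurally as pairwise sums of the roots of the reduced characteristic polynomials $t^2-2F_0t+F^s$ and $t^2-2G_0t+G^s$, and the determinant factors for free. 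What your approach buys is a genuinely rigorous rank analysis: the paper's argument implicitly equates the corank with the algebraic multiplicity of the eigenvalue $0$, which is only valid when $\rho(F)$ and $\rho(G)$ are diagonalisable ($F_v^s,G_v^s\neq0$). In the degenerate case $F_v^s=G_v^s=0$ with $F_0+G_0=0$, \emph{all four} eigenvalues vanish yet the rank is still $2$, so the paper's parenthetical ``exactly two eigenvalues are zero (i.e.\ rank two)'' is literally false there even though the theorem's conclusion survives; your Jordan-block case analysis (cases (ii) and (iii)) is precisely what closes this gap. The cost is the reliance on the isomorphisms $\HH_\CC\cong M_2(\CC)$ and $\mathrm{End}_\CC(\HH_\CC)\cong\HH_\CC\otimes\HH_\CC^{\mathrm{op}}$, which the paper avoids by staying with an explicit matrix; as you note, the two routes can be reconciled by verifying the eigenvalue claim on $M$ directly.
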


\begin{proof}
By splitting $F$, $G$ and $z$ into their scalar and vector parts, the equation \(F z + zG = 0\) becomes the following equivalent linear system
\begin{equation*}
\begin{cases}
  F_0z_0-\langle F_v,z_v\rangle+G_0z_0-\langle z_v,G_v\rangle=0,\\
  F_0z_v+z_0F_v+F_v\times z_v+z_0G_v+G_0z_v+z_v\times G_v = 0
\end{cases}
\end{equation*}
that is equivalent to
\begin{equation}
  \label{syl2}
  \begin{cases}
    (F_0+G_0)z_0-\langle F_v+G_v,z_v\rangle=0  \\
    (F_v+G_v)z_0+(F_0+G_0)z_v+(F_v-G_v)\times z_v=0.
  \end{cases}
\end{equation}
The matrix $M$ associated to the system is given by
\[M=\begin{pmatrix}
      F_0+G_0  & -F_1-G_1 & -F_2-G_2 & -F_3-G_3 \\
      F_1+G_1  & F_0+G_0 & -(F_3-G_3) & F_2-G_2 \\
      F_2+G_2  & F_3-G_3 & F_0+G_0 & -(F_1-G_1)\\
      F_3+G_3  & -(F_2-G_2) & F_1-G_1 & F_0+G_0
    \end{pmatrix}.
\]
The four (complex) eigenvalues of this matrix can be written as
\[
F_0+G_0\pm\sqrt{\pm 2\sqrt{F_v^{s}}\sqrt{G_v^{s}}-(F_v^s+G_v^s)},
\]
that is
\[
F_0+G_0\pm \imath(\sqrt{F_v^{s}}\pm\sqrt{G_v^{s}}).
\]
Any of these values is equal to zero if and only if their product is, i.e. if and only if $0=\det M=(F_{0}+G_{0})^{4}+2(F_{0}+G_{0})^{2}(F_{v}^{s}+G_{v}^{s})+(F_{v}^{s}-G_{v}^{s})^{2}$.

Now, if one among the four eigenvalues, say $F_0+G_0-\imath(\sqrt{F_v^{s}}-\sqrt{G_v^{s}})$, is zero then either $F_0+G_0=0$ and $\sqrt{F_v^{s}}-\sqrt{G_v^{s}}=0$ which is equivalent to saying that exactly two eigenvalues are zero (i.e. $\mathcal{S}_{F,G}$ has rank two) or $0\neq F_0+G_0= \imath(\sqrt{F_v^{s}}-\sqrt{G_v^{s}}) $, which implies that this is the only zero eigenvalue and $\mathcal{S}_{F,G}$ has rank three.
\end{proof}

We notice that if the components of $F$ and $G$ are real quaternions, then the previous
equality is satisfied if and only if $F_0=-G_0$ and $\|F_v\|=\|G_v\|$. This
is exactly the standard condition for two quaternions to be equivalent.
In this case it is well known that the matrix $M$ has rank equal to two.
In the next example we provide an explicit case in which the rank is equal to three.
\begin{example}
Take $F=F_v=\imath j+k$ and $G=\imath+\sqrt{2}/2(j+k)$. In this case, the matrix $M$ can be written as
\[
  M =
  \begin{pmatrix}
    \imath & 0 & -(\imath+{\sqrt{2}}/{2}) &-(1+\sqrt{2}/2)\\
    0 & \imath &-(1-\sqrt{2}/2) & \imath-\sqrt{2}/2\\
    \imath+\sqrt{2}/2 & 1-\sqrt{2}/2 & \imath & 0\\
    1+\sqrt{2}/2 & -(\imath-\sqrt{2}/2) & 0 & \imath
  \end{pmatrix}
\]
and it is easy to check that $\operatorname{rank}M = 3$.
\end{example}

We now pass to the case of functions from $\CC$ to $\HH_{\CC}$. With a small abuse of notations, these can be written as complex curves
\[f(z)=f_{0}(z)+f_{1}(z)i+f_{2}(z)j+f_{3}(z)k=f_{0}(z)+f_{v}(z)=(f_{0}(z),f_{1}(z),f_{2}(z),f_{3}(z)),\]
where, for any $\ell \in \{0,1,2,3\}$, $f_{\ell}$ represents a complex valued function of a complex variable.

Now, in the ring of meromorphic curves, the same result as described above holds without great issues. This behavior was deeply explained in~\cite{altavillaLAA}, where it is possible to find the following two results. First of all, given a domain $\Omega\subset\CC$ and two holomorphic curves $f$, $g\colon \Omega\to\HH_{\CC}$ we introduce the following Sylvester operator:
\[
\mathcal{S}_{f,g}(\chi) \coloneqq f\chi+\chi g,
\]
where $\chi\colon \Omega\to\HH_{\CC}$ is any other meromorphic curve.
Here the holomorphicity is regarded with respect to the usual complex structure in the domain and the multiplication by $\imath$ in the codomain.

By studying the operator $\mathcal{S}_{f,g}$ it is possible to prove the following result.
\begin{proposition}[Proposition 5.3~\cite{altavillaLAA}]
  \label{prop:chichi}
  Let $\Omega\subset\CC$ be a domain and let $f$, $g\colon \Omega\to\HH_{\CC}$ be holomorphic curves such that $f_{0}=g_{0}$, then there exists $\chi\colon \Omega\to\HH_{\CC}$ such that
  \begin{equation}
    \label{eq:fchigchi}
    f=\chi g\chi^{-1}
  \end{equation}
  if and only if $f_{v}^{s}\equiv g_{v}^{s}$.
\end{proposition}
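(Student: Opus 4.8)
\emph{Proof plan.} The plan is to prove the two implications separately: the forward one reduces to invariance of the complex norm, the converse to constructing $\chi$ explicitly (and, in a degenerate case, locally).

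For the ``only if'' direction I would argue as follows. Suppose $f=\chi g\chi^{-1}$ for a meromorphic curve $\chi$ with $\chi^{s}\not\equiv0$. Using that conjugation in $\HH_{\CC}$ is an anti-automorphism, that $ww^{c}=w^{s}$, and that $\chi^{-1}(\chi^{-1})^{c}=1/\chi^{s}$ is a central scalar, one gets
\[
f^{s}=\chi g\chi^{-1}(\chi^{-1})^{c}g^{c}\chi^{c}=\tfrac{1}{\chi^{s}}\,\chi gg^{c}\chi^{c}=\tfrac{g^{s}}{\chi^{s}}\,\chi\chi^{c}=g^{s},
\]
and since $f^{s}=f_{0}^{2}+f_{v}^{s}$, $g^{s}=g_{0}^{2}+g_{v}^{s}$ and $f_{0}=g_{0}$, this forces $f_{v}^{s}\equiv g_{v}^{s}$.

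For the ``if'' direction I would first reduce to $f_{0}=g_{0}=0$: as $f_{0}$ is a central scalar curve, $f=\chi g\chi^{-1}$ is equivalent to $f_{v}=\chi g_{v}\chi^{-1}$, so only the vector parts matter. Set $\rho\coloneqq f_{v}^{s}=g_{v}^{s}$; since a purely vectorial element squares to minus its squared norm, $f_{v}^{2}=-\rho=g_{v}^{2}$. The key step is the explicit transporter $\chi_{0}\coloneqq f_{v}+g_{v}$, which lies in the kernel of the Sylvester operator $\mathcal{S}_{f_{v},-g_{v}}$ because $f_{v}\chi_{0}=-\rho+f_{v}g_{v}=\chi_{0}g_{v}$; being purely vectorial it satisfies $\chi_{0}^{c}=-\chi_{0}$, so that, using $ab+ba=-2\langle a,b\rangle$ for purely vectorial $a$, $b$,
\[
\chi_{0}^{s}=-\chi_{0}^{2}=-(f_{v}^{2}+f_{v}g_{v}+g_{v}f_{v}+g_{v}^{2})=2\bigl(\rho+\langle f_{v},g_{v}\rangle\bigr).
\]
Whenever $\rho+\langle f_{v},g_{v}\rangle\not\equiv0$ I would take $\chi\coloneqq\chi_{0}$: then $\chi^{-1}=\chi^{c}/\chi^{s}$ is a meromorphic curve, $f_{v}\chi=\chi g_{v}$ gives $f_{v}=\chi g_{v}\chi^{-1}$ off the discrete zero set of $\chi^{s}$, and the identity propagates to all of $\Omega$ by the identity theorem for meromorphic curves.

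The hard part will be the degenerate locus $\rho+\langle f_{v},g_{v}\rangle\equiv0$, on which $\chi_{0}$ is nowhere invertible, as is the companion kernel element $\chi_{1}\coloneqq f_{v}g_{v}-\rho$ (one checks $f_{v}\chi_{1}=-\rho(f_{v}+g_{v})=\chi_{1}g_{v}$ and $\chi_{1}^{s}=2\rho(\rho+\langle f_{v},g_{v}\rangle)$): the obvious algebraic candidates collapse, and one is forced to argue pointwise and then globalize. Away from the discrete set where $f_{v}$ or $g_{v}$ vanishes (discrete because neither curve is identically zero), the values $f_{v}(z)$, $g_{v}(z)$ are nonzero traceless elements of $\HH_{\CC}\cong M_{2}(\CC)$ sharing the characteristic polynomial $t^{2}+\rho(z)$, hence conjugate there; over the simply connected pieces of the complement one can conjugate both curves holomorphically to a common normal form (diagonal when $\rho\neq0$, a fixed nilpotent when $\rho\equiv0$) and read off a local meromorphic transporter. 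The main obstacle is then to glue these local transporters across the exceptional points while controlling their poles, so that the resulting global $\chi$ still has $\chi^{s}\not\equiv0$ and satisfies $f_{v}=\chi g_{v}\chi^{-1}$ — this is exactly where the structure theory of null (isotropic) curves over $\HH_{\CC}$ from \cite{altavillaLAA} is used. Outside this degenerate locus the single formula $\chi=f_{v}+g_{v}$ already does everything.
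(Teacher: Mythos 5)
The paper does not actually prove this proposition --- it is imported verbatim as Proposition~5.3 of \cite{altavillaLAA} --- so the only in-paper material to compare against is Remark~\ref{remsol} (the solution family $\chi = fh+hg^{c}$) and the way Theorem~\ref{th:AA} uses it. Measured against that, your forward direction is correct and complete ($f^{s}=g^{s}$ from multiplicativity of the complex norm, then subtract $f_{0}^{2}=g_{0}^{2}$), the reduction to vector parts is correct (conjugates of vectorial elements are vectorial, since $w^{c}=-w$ is preserved), and the explicit transporter $\chi_{0}=f_{v}+g_{v}$ settles the converse whenever $f_{v}^{s}+\langle f_{v},g_{v}\rangle\not\equiv 0$. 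The gap is the degenerate locus, which you acknowledge but only sketch via a pointwise-conjugation-plus-gluing argument; as written this is not a proof, because gluing local transporters (each unique only up to the centralizer of $g_{v}$) into a single global meromorphic $\chi$ with $\chi^{s}\not\equiv0$ is precisely the nontrivial step, and you defer it back to \cite{altavillaLAA}.

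The good news is that the degenerate case does not require any local-to-global machinery: your two candidates are just the members $h=1$ and $h=g_{v}$ of the two-parameter family $\chi_{h}\coloneqq f_{v}h+hg_{v}$, and \emph{every} $h\in\HH_{\CC}$ gives an intertwiner, since $f_{v}\chi_{h}=-\rho h+f_{v}hg_{v}=\chi_{h}g_{v}$. At any point $z_{*}$ where $f_{v}(z_{*})$ and $g_{v}(z_{*})$ are both nonzero they are conjugate in $\HH_{\CC}\cong M_{2}(\CC)$ (traceless, equal determinant $\rho(z_{*})$, hence both regular semisimple or both regular nilpotent), so the space of pointwise intertwiners is two-dimensional, equals a left translate of the commutant $\CC[g_{v}(z_{*})]$, and therefore contains invertible elements; since $h\mapsto\chi_{h}(z_{*})$ has rank two (this is the rank-two case of the paper's Sylvester theorem), a generic \emph{constant} $h$ already gives $\chi_{h}^{s}(z_{*})\neq0$, hence $\chi_{h}^{s}\not\equiv0$ globally, and the identity-theorem argument you already use finishes the proof uniformly --- no normal forms, no gluing. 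This is essentially the content of Remark~\ref{remsol}. One caveat you should record either way: if exactly one of $f_{v}$, $g_{v}$ vanishes identically (e.g.\ $f=0$, $g=L$, which satisfies $f_{0}=g_{0}=0$ and $f_{v}^{s}\equiv g_{v}^{s}\equiv0$), no $\chi$ exists, so the statement as quoted carries an implicit nondegeneracy hypothesis that any complete proof must surface.
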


\begin{remark}
  \label{remsol}
  As explained in~\cite[Theorem 7.1]{altavillaLAA}, in the hypotheses of previous proposition, the set of $\chi$, such that $f=\chi g\chi^{-1}$ is given by \[\chi= fh+hg^{c},\] where $h\colon \Omega\to\HH_{\CC}$ is any meromorphic function such that $\chi$ is invertible. In particular, by varying $h$ it is possible to find many invertible solutions to Equation~\ref{eq:fchigchi}.
\end{remark}

\section{Minimal Surfaces}
\label{sec:minimal}

We are particularly interested in the specific consequences of Proposition~\ref{prop:chichi} for the case of the \emph{constant} function \(g(z) = L\) where \(L = i + \imath j\) (or any other non-zero complex quaternion of zero norm). This constant is such that \(g_0 = 0\) and $g^s_v=g^s=0$. Proposition~\ref{prop:chichi} provides all isotropic curves, and hence also minimal surfaces, for this situation:

\begin{corollary}
  \label{cor:chichi}
  Let $\Omega\subset\CC$ be a domain and $\Phi\colon\Omega\to\HH_{\CC}$ be a holomorphic function such that $\Phi_{0}\equiv \Phi^s \equiv 0$. Then, there exists a meromorphic curve $\chi\colon\Omega\to\HH_{\CC}$ such that
  \begin{equation}
    \label{eq:chichi2}
    \Phi=\chi L\chi^{-1}.
  \end{equation}
  All minimal surfaces in isothermal coordinates can be constructed from \eqref{eq:chichi2} by integration via \eqref{eq:Rint}.
\end{corollary}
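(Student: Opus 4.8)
The plan is to read this off from Proposition~\ref{prop:chichi} once the classical Weierstra\ss--Enneper correspondence has been rephrased inside $\HH_{\CC}$. First I would recall from Section~\ref{sec:min} that an isothermal minimal surface $X\colon\Omega\to\RR^{3}$ gives, through $\Phi = X_u - \imath X_v$, a holomorphic curve $\Phi\colon\Omega\to\CC^{3}$ with $\Phi_1^2+\Phi_2^2+\Phi_3^2\equiv 0$, and that conversely every such $\Phi$ recovers $X$, up to an additive constant, via \eqref{eq:Rint}. Identifying $\CC^{3}$ with the complex quaternions of vanishing scalar part, ``$\Phi$ holomorphic with null complex norm'' is literally the statement ``$\Phi\colon\Omega\to\HH_{\CC}$ holomorphic with $\Phi_0\equiv 0$ and $\Phi^s = \Phi_0^2+\Phi_1^2+\Phi_2^2+\Phi_3^2\equiv 0$'', which is exactly the hypothesis of the corollary.

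Next I would verify that $L = i + \imath j$ is an admissible base point: it is a nonzero element of $\HH_{\CC}$ with $L_0 = 0$ and $L^s = 0^2 + 1^2 + \imath^2 + 0^2 = 0$, hence also $L_v^s = L^s = 0$. Applying Proposition~\ref{prop:chichi} with $f \coloneqq \Phi$ and $g \coloneqq L$ (the constant curve), the standing hypothesis $f_0 = g_0$ holds since both scalar parts vanish, and the criterion $f_v^s \equiv g_v^s$ reads $\Phi^s \equiv L^s$, i.e.\ $0\equiv 0$. The proposition therefore produces a meromorphic, invertible curve $\chi\colon\Omega\to\HH_{\CC}$ with $\Phi = \chi L\chi^{-1}$, and feeding this $\Phi$ back into \eqref{eq:Rint} returns $X$. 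This proves the claim that every isothermal minimal surface arises from \eqref{eq:chichi2}.

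For completeness I would also check the reverse implication, that \eqref{eq:chichi2} produces nothing but isotropic curves: for any invertible meromorphic $\chi$ the curve $\Phi\coloneqq\chi L\chi^{-1}$ has $\Phi_0\equiv L_0 = 0$ because conjugation preserves the scalar part, and $\Phi^s\equiv 0$ because quaternionic conjugation is an anti-automorphism, so the complex norm $z\mapsto z^s$ is multiplicative and $\Phi^s = \chi^s L^s (\chi^{-1})^s = 0$. Hence on any simply connected subdomain where $\Phi$ is holomorphic, \eqref{eq:Rint} yields an isothermal minimal surface; explicit invertible choices of $\chi$ are furnished by Remark~\ref{remsol} in the form $\chi = \Phi h + hL^{c}$.

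I expect the only genuine subtlety to be bookkeeping rather than substance: one must keep the two incarnations of $\Phi$ aligned---the $\CC^{3}$-valued hodograph of the Weierstra\ss--Enneper picture versus the $\HH_{\CC}$-valued curve with zero scalar part---and remember that $\chi$ is only meromorphic, so that \eqref{eq:chichi2} is an identity of meromorphic functions and the surface is obtained away from the poles of $\chi$, equivalently away from the zeros of $\chi^s$. Beyond this reconciliation, no estimates or new constructions are required: the whole statement is a direct specialization of Proposition~\ref{prop:chichi}.
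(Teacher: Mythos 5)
Your argument is correct and coincides with the paper's own (essentially one-line) justification: apply Proposition~\ref{prop:chichi} to $f=\Phi$ and the constant curve $g=L$, noting $\Phi_0=L_0=0$ and $\Phi_v^s=\Phi^s\equiv 0=L^s=L_v^s$, then combine with the Weierstra\ss-Enneper correspondence of Section~\ref{sec:min}; your converse check that $\chi L\chi^{-1}$ is always isotropic is a harmless addition. One peripheral slip: quoting Remark~\ref{remsol} you write $\chi=\Phi h+hL^{c}$, but since $L^c=-L$ this choice satisfies $\Phi\chi=\chi L^c$, i.e.\ it solves $\Phi=-\chi L\chi^{-1}$; the correct explicit form in this purely vectorial situation is $\chi=\Phi h+hL$, as used in the proof of Theorem~\ref{th:AA} --- this does not affect your main argument, which rests on Proposition~\ref{prop:chichi} alone.
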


In particular, the previous result asserts that---from the algebraic point of view---all ``hodographs'' of minimal surfaces are equivalent under conjugation with a suitable meromorphic curve $\chi$. Therefore, in the algebra $\HH_{\CC}$ of complex quaternions, it makes sense to perform at least formally all the usual computations for PH curves. We will elaborate this idea after an example to motivate the theory and to clarify the algebraic construction.

\begin{example}[Catenoid]
  \label{ex:catenoid}
  \cite[Example~2.7]{Forstneri2023} considers a conformal parametrization of the
  catenoid obtained as the real part of the integral of
  \begin{equation}
    \label{eq:catenoid_Phi}
    \Phi(z) = \frac{z^2-1}{2z^2}i - \frac{\imath(z^2+1)}{2z^2}j - \frac{1}{z}k.
  \end{equation}
  With
  \begin{equation*}
    \chi(z) = \frac{1}{2z^2} ((1-3z^2)i + \imath(1-z^2)j + 2zk)
  \end{equation*}
  we indeed have $\Phi = \chi L \chi^{-1}$ as claimed by Corollary~\ref{cor:chichi}. The thus obtained isothermal parametrization of the catenoid reads
  \begin{equation}
    \label{eq:catenoid}
    X(u,v) =
    \frac{1}{2}
    \begin{pmatrix}
      u + \frac{u}{u^2+v^2} \\
      v + \frac{v}{u^2+v^2} \\
      -\ln(u^2+v^2)
    \end{pmatrix}.
  \end{equation}
  The resulting surface patch together with a more common parameterization of the surface is depicted in Figure~\ref{fig:catenoid}. We remark that the parametrization \eqref{eq:catenoid} is not rational even if \eqref{eq:catenoid_Phi} is and we suggest to compare this with Example~\ref{ex:rational}, below.
\end{example}

\begin{figure}
  \centering
  \includegraphics[]{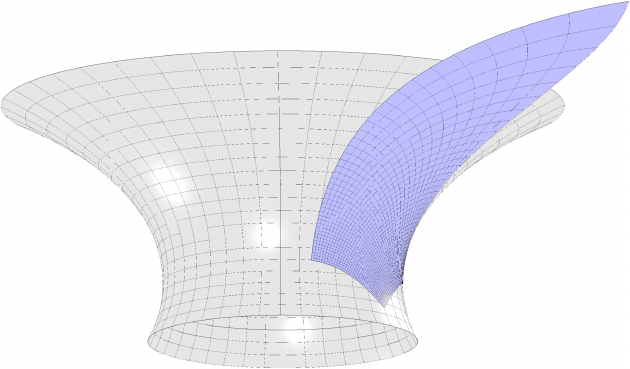}
  \caption{Catenoid in standard and in isothermal parametrization.}
  \label{fig:catenoid}
\end{figure}

We are now going to make the (only seemingly small) step to re-formulate our construction of polynomial and rational minimal surfaces in isothermal parametrization in analogy to PH curves, cf. Theorem~\ref{th:PH-curves}:
\begin{itemize}
\item We replace the meromorphic function \(\chi\) by a quaternionic polynomial~\(A\),
\item we replace the inversion in \eqref{eq:chichi2} by conjugation, and
\item we absorb denominators and possible scalar polynomial factors into a
  rational or polynomial function~\(\lambda\).
\end{itemize}

\begin{theorem}
  \label{th:AA}
  Let $\Omega\subset\CC$ be a simply connected domain. All polynomial (resp. rational) minimal surfaces $X\colon \Omega\to\RR^{3}$ in isothermal parameterization are obtained by the formula \eqref{eq:Rint} from an isotropic complex curve $\Phi$ of the form
  \begin{equation}
    \label{eq:lALA}
    \Phi = \lambda A L A^c,
  \end{equation}
  where $A\in \HH_{\CC}[z]$ is a complex quaternion valued polynomial and $\lambda$ is a complex polynomial (resp. rational) function in $z$. In the case of rational $\lambda$ all the residues of \eqref{eq:lALA} must be zero in order to ensure a rational integral.
\end{theorem}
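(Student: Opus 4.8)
The plan is to deduce Theorem~\ref{th:AA} from Corollary~\ref{cor:chichi} by carefully tracking how the freedom in $\chi$ can be replaced by a quaternionic \emph{polynomial} $A$ plus a scalar factor $\lambda$, and then by verifying the residue condition. First I would observe that in \eqref{eq:chichi2} we have $\Phi = \chi L \chi^{-1} = \chi L \chi^c / \chi^s$. Writing $\lambda_0 \coloneqq 1/\chi^s$, which is a meromorphic scalar function, we get $\Phi = \lambda_0 \chi L \chi^c$, which already has the shape of \eqref{eq:lALA} except that $\chi$ is merely meromorphic rather than polynomial. The key reduction is then: any rational (resp. polynomial, when $\Phi$ is polynomial) $\Phi$ of the form $\chi L \chi^{-1}$ admits a representation $\Phi = \lambda A L A^c$ with $A \in \HH_\CC[z]$ and $\lambda$ rational (resp. polynomial). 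To obtain this, I would use Remark~\ref{remsol}: the set of solutions is $\chi = \Phi h + h L^c$ for arbitrary meromorphic $h$. Choosing $h$ appropriately—e.g. a scalar multiple of a constant invertible quaternion, or clearing denominators—one can arrange $\chi$ to be a \emph{polynomial} $A$, at the cost of multiplying $\Phi$ by the scalar rational function $1/A^s$; absorbing leftover common scalar polynomial factors of the entries of $A L A^c$ into $\lambda$ finishes this direction. Conversely, for any $A \in \HH_\CC[z]$ and rational $\lambda$, the curve $\Phi = \lambda A L A^c$ satisfies $\Phi_0 = \lambda \,(A L A^c)_0$ and $\Phi^s = \lambda^2 A^s L^s (A^c)^s = 0$ since $L^s = 0$; moreover $(ALA^c)_0 = 0$ because conjugation by $A$ (followed by the scalar $A^s$) preserves the vanishing scalar part of $L$—concretely, $(ALA^c)^c = (A^c)^c L^c A^c = A(-L)A^c = -ALA^c$ shows $ALA^c$ is purely vectorial. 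Hence $\Phi$ is an isotropic holomorphic curve with $\Phi_0 \equiv \Phi^s \equiv 0$, so by the discussion in Section~\ref{sec:min} its real (or imaginary) integral \eqref{eq:Rint} is an isothermal minimal surface, rational whenever $\lambda$ is (and polynomial whenever $\lambda$ and $A$ are polynomial).

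For the residue clause I would argue exactly as in the PH-curve case (Theorem~\ref{th:PH-curves}): when $\lambda$ is rational, $\Phi = \lambda A L A^c$ is a rational differential on $\Omega$, and $X$ in \eqref{eq:Rint} is well-defined and single-valued (hence rational) on the simply connected $\Omega$ if and only if $\Re\oint_\gamma \Phi\,d\omega = 0$ around every pole, which—together with the analogous statement for the imaginary part used to pin down the conjugate surface—forces every residue of $\Phi$ to vanish. Conversely if all residues vanish, $\int_\gamma \Phi$ is path-independent and rational, so $X$ is a rational minimal surface. This is a verbatim transcription of the standard Weierstra\ss{}–Enneper period argument and needs no new ideas.

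The main obstacle is the polynomialization step: showing that the meromorphic $\chi$ furnished by Corollary~\ref{cor:chichi} can always be replaced by an honest quaternionic \emph{polynomial} $A$ while keeping $\lambda$ rational and, crucially, while preserving polynomiality of $\Phi$ when $\Phi$ is polynomial. The subtlety is that $\chi = \Phi h + hL^c$ depends on an auxiliary meromorphic $h$, and one must choose $h$ so that (i) $\chi$ is invertible (i.e. $\chi^s \not\equiv 0$), (ii) $\chi$ is polynomial after clearing denominators, and (iii) the scalar correction $1/\chi^s$ times any extracted common factor lands in the rational (resp. polynomial) class claimed. I expect this to come down to a careful gcd/clearing-denominators argument over the ring $\HH_\CC[z]$—keeping in mind that $\HH_\CC$ has zero divisors, so ``$\chi^s \not\equiv 0$'' is a genuine genericity condition on $h$ rather than automatic—together with the observation that if $\Phi$ itself is polynomial then one may take $h$ polynomial and the common scalar factor to be polynomial, so that $\lambda$ stays polynomial. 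Once this reduction is in place, the rest of the proof is the bookkeeping sketched above.
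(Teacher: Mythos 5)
Your overall architecture matches the paper's: both proofs pass through $\chi = \Phi h + hL$ from Remark~\ref{remsol}, clear denominators to get $\Phi = \lambda B L B^c$ with $\lambda = 1/B^s$ rational, and you additionally (and correctly) verify the converse direction via $(ALA^c)^c = -ALA^c$ and $(ALA^c)^s = A^sL^s(A^c)^s = 0$, which the paper leaves implicit. A small sign slip: you write $\chi = \Phi h + hL^c$; since $L^c = -L$ this gives $\Phi\chi - \chi L = -2\Phi hL$, which does not vanish in general. The correct intertwiner is $\chi = \Phi h + hL$, for which $\Phi\chi = \Phi hL = \chi L$ using $\Phi^2 = L^2 = 0$, exactly as the paper checks.

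The genuine gap is the polynomial case, which you explicitly flag as ``the main obstacle'' but do not resolve, and it is precisely where the paper's proof has real content. Your plan — ``absorbing leftover common scalar polynomial factors of the entries of $ALA^c$ into $\lambda$'' — does not work as stated: setting $\lambda \coloneqq \gcd(\Phi_1,\Phi_2,\Phi_3)$ and $\Phi' \coloneqq \Phi/\lambda$, one still has to produce a \emph{polynomial} $A$ with $\Phi' = ALA^c$ exactly, i.e.\ with $A^s$ a nonzero constant, and there is no a priori reason a reduced isotropic polynomial admits such a normalized preimage. The paper supplies two specific ideas here: first, writing $C_{12} = C_1 + \imath C_2$ and $C_{03} = \imath C_0 + C_3$, one computes $CLC^c = (C_{12}^2 - C_{03}^2)i - \imath(C_{12}^2+C_{03}^2)j + 2C_{12}C_{03}k$ and deduces that the scalar content of $CLC^c$ equals $\gcd(C_{12},C_{03})^2 = \sigma^2 = C^s$ — a perfect square; second, choosing $h = \tfrac{1}{2}(\alpha - \beta j)$ in the ansatz $A = \Phi'h + hL$ gives $A^s = (\alpha C_{12}/\sigma + \beta C_{03}/\sigma)^2$, and since $C_{12}/\sigma$ and $C_{03}/\sigma$ are coprime, Bézout's identity yields $\alpha,\beta \in \CC[z]$ with $A^s = 1$. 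Your gcd/genericity heuristic points in the right direction but omits both the perfect-square identification and the Bézout normalization, so the polynomial half of the theorem remains unproved in your write-up. (Your residue discussion is also slightly muddled — a nonzero real residue already destroys rationality of $\Re\int\Phi$ because of the $\ln|z-z_0|$ term, no appeal to the conjugate surface is needed — but the paper does not prove that clause either.)
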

\begin{proof}
As stated in Remark~\ref{remsol}, the solution to the equation \(\Phi = \chi L \chi^{-1}\) has the form \(\chi = \Phi h + h L\) for some rational quaternionic function $h$ such that \(\chi\) is invertible. Indeed, since \(\Phi^2 = -\langle \Phi, \Phi \rangle = -\Phi^s = 0\) and, similarly, \(L^2 = 0\), we can verify that:
\[
\Phi(\Phi h + h L) - (\Phi h + h L)L = \Phi h L - \Phi h L = 0.
\]

The statement of the theorem follows easily when \(\Phi\) is rational. Writing \(\chi = B/w\) with \(B \in \HH_\CC[z]\) and \(w \in \CC[z]\) we have \(\chi^{-1} = wB^c/B^s\), providing
\[
  \Phi = \chi L \chi^{-1}
  = \frac{B}{w} L \frac{wB^c}{B^s}
  = \frac{BLB^c}{B^s}
  = \lambda B L B^c
\]
with \(\lambda = 1/B^s\) and \(A = B\).

In the special case where \(\Phi\) is a polynomial some more work is needed to show that a \emph{polynomial \(\lambda\)} is possible.

Writing \(\Phi = \Phi_1i + \Phi_2j + \Phi_3k = BLB^c/B^s\) for a suitable \(B \in \HH_\CC[z]\), we set \(\lambda \coloneqq \gcd(\Phi_1,\Phi_2,\Phi_3)\). The quaternionic polynomial \(\Phi' \coloneqq \Phi/\lambda\) then is reduced, that is, it satisfies \(\gcd(\Phi'_1,\Phi'_2,\Phi'_3) = 1\). As above, there exists \(C \in \HH_\CC[z]\) such that \(\Phi' = CLC^c/C^s\). The fact that \(\Phi'\) is polynomial and reduced means precisely that \(C^s\) is the scalar polynomial factor of maximal degree of \(CLC^c\) (the \(\gcd\) of its coefficients). To compute it we write \(C = C_0 + C_1i + C_2j + C_3k\), set
\[
  C_{03} \coloneqq \imath C_0+ C_3,\quad
  C_{12} \coloneqq C_1+\imath C_2,
\]
and obtain
\[
  CLC^c = (C_{12}^2 - C_{03}^2)i - \imath (C_{12}^2 + C_{03}^2)j + 2C_{12}C_{03}k.
\]
Now
\begin{align*}
  C^s = \gcd(C_{12}^2-C_{03}^2,-\imath(C_{12}^2+C_{03}^2), 2C_{12}C_{03}) &= \gcd(C_{12}^2-C_{03}^2,C_{12}^2+C_{03}^2, C_{12}C_{03}) \\
  &= \gcd(C_{12}^2,C_{03}^2, C_{12}C_{03}) \\
  &= \gcd(C_{12},C_{03})^2 = \sigma^2
\end{align*}
where \(\sigma \coloneqq \gcd(C_{12},C_{03})\).

Finally, we show existence of \(h = \frac{1}{2}(\alpha - \beta j) \in \HH_\CC[z]\) such that the polynomial
\[
  A \coloneqq \frac{CLC^c}{C^s}h + hL = \frac{1}{\sigma^2} CLC^ch + hL
\]
satisfies \eqref{eq:lALA}. With \(A\) thus defined we know that \(\Phi' =
(ALA^c)/A^s\) holds true and we need to show that \(h\) can be chosen such that
\(A^s = 1\). We compute explicitly
\begin{align*}
  A^s = AA^c =
  \Bigl(\alpha\frac{C_{12}}{\sigma} + \beta\frac{C_{03}}{\sigma}\Bigr)^2.
\end{align*}
Because \(C_{12}/\sigma\) and \(C_{03}/\sigma\) are relatively prime by
Bézout's identity there exist \(\alpha\), \(\beta \in \CC[z]\) such that \(A^s = 1\).
\end{proof}

\begin{example}
  \label{ex:catenoid2}
  We resume Example~\ref{ex:catenoid}. Theorem~\ref{th:AA} predicts existence of a rational function $\lambda \in \CC(z)$ and a quaternionic polynomial $A \in \HH_\CC[z]$ such that\footnote{Actually, Theorem~\ref{th:AA} predicts existence of $\lambda$ and $A$ only for rational minimal surfaces. But the constructive proof only assumes rational $\Phi$ and is applicable here as well.}
  \begin{equation*}
    \Phi(z) = \lambda(z) A(z) L A(z)^c.
  \end{equation*}
  Indeed, this equation is satisfied with
  \begin{equation}
    \label{eq:Alambda}
    A = (1-3z^2)i + \imath(1-z^2)j + 2zk
    \quad\text{and}\quad
    \lambda = \frac{1}{A^s} = \frac{1}{8z^4}.
  \end{equation}
  Note that $A$ is just the numerator of the rational function $\chi$ introduced in Example~\ref{ex:catenoid}.
\end{example}

The next proposition shows that the representation \eqref{eq:farouki} of \cite{FAROUKI2022127439} and \eqref{eq:lALA} both describe the complete set of polynomial minimal surfaces.

\begin{proposition}
  In the case of \emph{polynomial} minimal surfaces, the representations \eqref{eq:farouki} and \eqref{eq:lALA} are equivalent.
\end{proposition}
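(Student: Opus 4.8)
The plan is to show that \eqref{eq:farouki} and \eqref{eq:lALA} produce exactly the same family of isotropic polynomial curves $\Phi$, so that completeness of one (which for \eqref{eq:lALA} is Theorem~\ref{th:AA}) transfers to the other. The key observation is that in both representations $\Phi$ is built from only \emph{two} scalar preimage polynomials together with one scalar factor, and that the two pairs of polynomials are linked by an explicit invertible linear change of variables over $\CC[z]$.

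First I would invoke the identity computed in the proof of Theorem~\ref{th:AA}: for $A = A_0 + A_1 i + A_2 j + A_3 k \in \HH_\CC[z]$, setting $A_{03} \coloneqq \imath A_0 + A_3$ and $A_{12} \coloneqq A_1 + \imath A_2$, one has
\[
  A L A^c = (A_{12}^2 - A_{03}^2)\, i - \imath (A_{12}^2 + A_{03}^2)\, j + 2 A_{12} A_{03}\, k .
\]
Hence $A L A^c$ depends on $A$ only through the pair $(A_{12}, A_{03}) \in \CC[z]^2$, and conversely every such pair arises from some $A$ (take, e.g., $A_0 = A_2 = 0$). So the set of curves of the form \eqref{eq:lALA} is exactly the set of $\lambda\,\bigl((a^2-b^2)\,i - \imath(a^2+b^2)\,j + 2ab\,k\bigr)$ with $a, b \in \CC[z]$ and $\lambda \in \CC[z]$ (the three slots being the $i,j,k$-coefficients).

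Next I would match this against \eqref{eq:farouki} via the substitution $a = \tfrac{1}{\sqrt2}(p + \imath q)$, $b = \tfrac{1}{\sqrt2}(\imath p + q)$, whose coefficient matrix has determinant $1$, hence is invertible over $\CC[z]$ with inverse $p = \tfrac{1}{\sqrt2}(a - \imath b)$, $q = \tfrac{1}{\sqrt2}(b - \imath a)$. A short direct computation then gives
\[
  (a^2-b^2)\,i - \imath(a^2+b^2)\,j + 2ab\,k = (p^2-q^2)\,i + 2pq\,j + \imath(p^2+q^2)\,k ,
\]
so that $\lambda\, A L A^c = \lambda\,\bigl(p^2-q^2,\ 2pq,\ \imath(p^2+q^2)\bigr)$, which is precisely \eqref{eq:farouki} with polynomial factor $w = \lambda$. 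Reading the substitution in both directions — $(a,b)$ from $(p,q)$ and $(p,q)$ from $(a,b)$, with $\lambda \leftrightarrow w$ — shows that \eqref{eq:farouki} and \eqref{eq:lALA} describe the same isotropic polynomial curves, hence the same polynomial minimal surfaces under \eqref{eq:Rint}; by Theorem~\ref{th:AA} both are complete.

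I do not expect a genuine obstacle here: the argument is a bookkeeping exercise around a single quadratic identity. The one point requiring a little care is that the quaternion $A$ carries four component polynomials while $A L A^c$ sees only the two combinations $A_{12}, A_{03}$, so the correspondence is between the produced curves $\Phi$ (equivalently, between preimages modulo the redundant $A_0, A_2$), not a literal bijection of data; and one should note that the $\sqrt2$'s — or, in an alternative normalization, an extra factor $2$ absorbed into $\lambda$ — are harmless because all preimage polynomials are permitted complex coefficients. The rational case would be handled identically, with $\lambda, w \in \CC(z)$ and the same residue conditions.
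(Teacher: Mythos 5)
Your proposal is correct and follows essentially the same route as the paper: both establish an explicit invertible linear correspondence between the preimage data $(p,q,w)$ of \eqref{eq:farouki} and $(A,\lambda)$ of \eqref{eq:lALA}, using the identity $ALA^c = (A_{12}^2-A_{03}^2)i - \imath(A_{12}^2+A_{03}^2)j + 2A_{12}A_{03}k$ with $A_{12}=A_1+\imath A_2$, $A_{03}=\imath A_0+A_3$. The paper simply states the substitutions (e.g.\ $A = p+pi+qj+qk$, and $p,q$ as explicit linear combinations of the $a_\ell$) without deriving them, whereas you make the reduction to the two effective parameters and the determinant-one change of variables explicit; your formulas agree with the paper's up to harmless scalar normalizations absorbed into $w$ or $\lambda$.
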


\begin{proof}
  Given $p$, $q$, and $w$ as in \eqref{eq:farouki} we can set $\lambda \coloneqq w$ and $A \coloneqq p + pi + qj + qk$. Conversely, given $A = a_0 + a_1i + a_2j + a_3k$ and $\lambda$ as in \eqref{eq:lALA}, we can set $w \coloneqq \lambda$, $p \coloneqq \frac{1}{2}(a_0+a_1+\imath a_2-\imath a_3)$, and $q \coloneqq \frac{1}{2}(\imath a_0 - \imath a_1 + a_2 + a_3)$.
\end{proof}

\begin{example}[Rational Minimal Surface]
  \label{ex:rational}
  We continue Examples~\ref{ex:catenoid} and \ref{ex:catenoid2} but replace the
  integrand $\Phi$ of \eqref{eq:catenoid_Phi} by
  \begin{multline*}
    \frac{z^4+1}{z^2}\Phi = \frac{(z^4+1)(z^2-1)}{2z^4}i - \frac{\imath(z^4+1)(z^2+1)}{2z^4}j - \frac{z^4+1}{z^3}k \\
= \frac{-i-\imath j}{2z^4}
    - \frac{k}{z^3}
    + \frac{i-\imath j}{2z^2}
    - \frac{i+\imath j}{2}
    - kz
    + \frac{(i-\imath j)z^2}{2}.
  \end{multline*}
  Clearly, this can also be written as
  \begin{equation*}
  \frac{z^4+1}{z^2}\Phi = \tilde{\lambda}(z) A(z) L A(z)^c
  \end{equation*}
  where $\tilde{\lambda} = (z^4+1)\lambda z^{-2}$ and both, $\lambda$ and $A$, are as in \eqref{eq:Alambda}. The factor $(z^4+1)z^{-2}$ is chosen such that the residue at $z = 0$ is zero whence the integral is rational:
  \begin{equation*}
    \int \frac{z^4+1}{z^2}\Phi \dif z =
\frac{i+\imath j}{6z^3}
    + \frac{k}{2z^2}
    - \frac{i-\imath j}{2z}
    - \frac{(i-\imath j)z}{2}
    + \frac{kz^2}{2}
    + \frac{(i-\imath j)z^3}{6}.
  \end{equation*}
  Substituting $z = u + \imath v$ and computing the real part, we find the
  minimal surface parametrization
  \begin{equation*}
    X(u,v) =
    \frac{1}{6(u^2+v^2)^3}
    \begin{pmatrix}
      (u^2+v^2+1)u(u^6-u^4v^2-5u^2v^4-3v^6-4u^4-4u^2v^2+u^2-3v^2) \\
      (u^2+v^2+1)(3u^6+5u^4v^2+u^2v^4-v^6+4u^2v^2+4v^4+3u^2-v^2)v \\
      -3(u^2+v^2)(-v+u)(u+v)(u^2+v^2-1)(u^2+v^2+1)
    \end{pmatrix}.
  \end{equation*}
  One part of this surface is depicted in Figure~\ref{fig:rational}.
\end{example}

\begin{figure}
  \centering
  \includegraphics[]{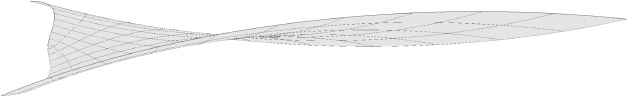}
  \caption{A rational minimal surface.}
  \label{fig:rational}
\end{figure}

\begin{remark}
  \label{rem:reduction-to-real}
  As already indicated in the proof of Theorem~\ref{th:AA}, the polynomial $A \in \HH_\CC$ in Example~\ref{ex:rational} is not unique. In fact, the degrees of freedom related to the stabilizers of $L$ under conjugation with $\HH_\CC$ allow for a quaternionic polynomial $B$ with \emph{real} coefficients such that $ALA^c = BLB^c$. With $A = a_0 + a_ii + a_jj + a_kk$ we have
  \begin{equation}
    \label{eq:realB}
    B \coloneqq \Im a_k + \Re a_0 + (\Re a_i-\Im a_j)i + (\Im a_i+\Re a_j)j + (\Re a_k-\Im a_0)k.
  \end{equation}
\end{remark}

\begin{example}
  Consider the polynomial $A \in \HH_\CC$ of Equation~\eqref{eq:Alambda}. We have $A = a_0 + a_ii + a_jj + a_kk$ with
  \begin{equation*}
    a_0 = 0,\quad a_i = -3z^2+1,\quad a_j = \imath(1-z^2),\quad a_k = 2z.
  \end{equation*}
  By Equation~\ref{eq:realB} the real polynomial \(B \coloneqq -2z^2i + 2zk\) satisfies $ALA^c = BLB^c$.
\end{example}

\begin{remark}
  If $A \in \HH[z]$ and $\lambda \equiv 1$, then the resulting minimal surface has PH curves as parameter lines. This statement is weaker than \cite[Theorem~3]{FAROUKI2022127439} but, in the context of our derivation, rather obvious. It is also clear that the parameter lines are PH curves if $\lambda$ is a square.
\end{remark}

\begin{example}
    Let us demonstrate on this example how efficiently the preimage pair $A$, $\lambda$ can be computed for a given minimal surface. Let us consider the Richmond's surface
    \[X(u,v)=\left(
      \frac{u^3}{3}-uv^2+\frac{u}{u^2+v^2},
      \frac{v^3}{3}-u^2v-\frac{v}{u^2+v^2},
      2 u
    \right),\]
    see e.g. \cite[Section~5]{odehnal16} and Figure~\ref{fig:enneper-richmond}, right. Applying the derivative operator \eqref{eq:Rder} we obtain the holomorphic isotropic curve \[\Phi(z)=\Bigl( \frac{z^4-1}{z^2},\frac{\sqrt{-1} (z^4+1)}{z^2},2\Bigr)\] which we can identify with a complex quaternionic rational function. Any solution to the problem $\chi L \chi^{-1}=\Phi$ has the form $\chi=\Phi h + h L$. We can for example set $h=z^2$ which is the (real) greatest common denominator of $\Phi$ and leads to a polynomial $\chi$
    \[\chi=(z^4+z^2-1)i+\sqrt{-1} (z^4+z^2+1)j+2z^2k.\]
    Setting $A=\chi$ and $\lambda=1/\vert A \vert^2=-1/(4z^2)$ we obtain the desired representation $$X(u,v)=\Re\Bigl(\int_{\gamma}\lambda(z)A(z)LA(z)^c \dif z\Bigr),$$ where the integral is path independent. 
\end{example}

\begin{figure}
  \centering
  \includegraphics[]{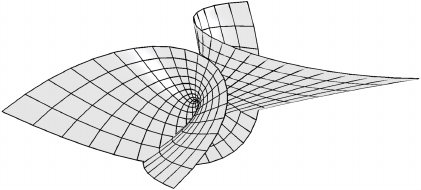}
  \includegraphics[]{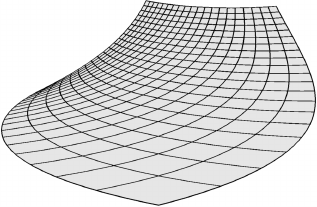}
  \caption{Enneper's minimal surface (left) and Richmond's minimal surface (right) in isothermal parametrization.}
  \label{fig:enneper-richmond}
\end{figure}

\section{Enneper Patches}
\label{sec:enneper-patches}

In this section we will discuss polynomial minimal surfaces which have a quaternionic preimage $A$ of degree one and $\lambda = 1$ in order to demonstrate how our theory can be applied. The resulting polynomial parametrization $X(u,v)$ of the minimal surface is then of degree three whence the minimal surface itself is the Enneper surface \cite{cosin,monterde}. The counterpart of this statement for real PH cubics is their characterization as curves of constant slope over Tschirnhausen cubics. Using the same approach as in Remark~\ref{th:PH3} it can be shown that any linear preimage $A$ generates a cubic minimal surface which is merely a rotation, scaling, translation and reparameterization of the following model example.

\begin{example}
  \label{ex:enneper}
  Consider the linear polynomial $A = z + j$. The minimal surface parametrization obtained as the real part of the integral of $\Phi \coloneqq ALA^c$ reads
  \[
    X(u,v) = (\tfrac{1}{3}u^3-uv^2-u)i + (\tfrac{1}{3}v^3-u^2v-v)j + (v^2-u^2)k.
  \]
  The corresponding surface, parameterized over $[-3, 3] \times [-3, 3]$ is displayed in Figure~\ref{fig:enneper-richmond}, left. It is Enneper's surface in isothermal parametrization.
\end{example}

In \cite{FAROUKI2022127439} the authors have already studied patches of Enneper surfaces and more general minimal surfaces defined by their boundary curves. In contrast to this paper, we rather focus on the interpolation of derivative data.

For actually working with Enneper surface patches over a parameter rectangle with vertices
\begin{equation*}
  (u_0,v_0),\quad (u_1,v_0),\quad (u_1,v_1),\quad (u_0,v_1)
\end{equation*}
it is desirable to design the quadratic polynomial $\Phi(z)$ directly via its values $\varphi_{\ell} \coloneqq \Phi(z_{\ell})$ where
\begin{equation*}
  z_0 = u_0 + \imath v_0,\quad
  z_1 = u_1 + \imath v_0,\quad
  z_2 = u_1 + \imath v_1,\quad
  z_3 = u_0 + \imath v_1.
\end{equation*}
Note that the complex vector $\Phi(z_0)$ is related to the partial
derivatives of $X(u,v)$ at the point $(u_0,v_0)$ via
\begin{equation*}
  \pd{X}{u}(u_0,v_0) = 2\Re\Phi(z_0),\quad
  \pd{X}{v}(u_0,v_0) = -2\Im\Phi(z_0)
\end{equation*}
and similar for $\varphi_1$, $\varphi_2$, and $\varphi_3$. What we actually do describe is therefore the partial derivative vectors at the patch corners. The values $\varphi_0$, $\varphi_1$, $\varphi_2$, and $\varphi_3$ are not independent but subject to the some constraints that are clarified by the following theorem:

\begin{theorem}
  \label{maincub}
  Consider a rectangle $\mathcal{R}\subset\CC$ of legs measuring $r_{1}$, $r_{2}>0$, set $R=r_{1}+ \imath r_{2}$ and denote the rectangle vertices anticlockwise as $P_{0}$, $P_{1}$, $P_{2}$, $P_{3}\in\CC$. Let $\varphi_{0}$, $\varphi_{1}$, $\varphi_{2}$, $\varphi_{3}$ be any four elements in $\HH_{\CC}$. Then, there exists a degree one polynomial $A\in\HH_{\CC}[z]$, such that $\Phi(z)=A(z)LA(z)^{c}$ satisfies $\Phi(P_{\ell})=\varphi_{\ell}$ for $\ell=0,1,2,3$, if and only if the following conditions are satisfied:
  \begin{gather}
    \varphi_\ell + \varphi_\ell^c = \varphi_{\ell}\varphi_\ell^c = 0\quad\text{for $\ell=0,1,2,3;$}\label{eq1}\\
    |R|^{2}\varphi_{0}+R^{2}\varphi_{1}-|R|^{2}\varphi_{2}-R^{2}\varphi_{3}=0;\label{eq2}\\
    \CR(P_0,P_1,P_2,P_3) = \CR(\varphi_0,\varphi_1,\varphi_2,\varphi_3).\label{eq3}
  \end{gather}
  Here, $\CR$ denotes a cross-ratio.
\end{theorem}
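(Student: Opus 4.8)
The plan is to translate the whole statement into Weierstra\ss--Enneper data for $\Phi$. As in the proof of Theorem~\ref{th:AA}, a polynomial $A = A_0 + A_1 i + A_2 j + A_3 k \in \HH_\CC[z]$ of degree at most one yields, on setting $a := A_1 + \imath A_2$ and $b := \imath A_0 + A_3 \in \CC[z]$ (both of degree $\le 1$),
\[
  \Phi = ALA^c = (a^2 - b^2)\,i - \imath(a^2 + b^2)\,j + 2ab\,k ,
\]
which is automatically purely vectorial with $\Phi^s \equiv 0$. Conversely the $\CC$-linear map $\HH_\CC \to \CC^2$, $q \mapsto (q_1 + \imath q_2, \imath q_0 + q_3)$, is onto (indeed $ai + bk \mapsto (a,b)$), so every such pair $(a,b)$ arises from a degree-$\le 1$ polynomial $A$. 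Hence the theorem is equivalent to: there exist $a$, $b \in \CC[z]$ of degree $\le 1$ with $\bigl(a(P_\ell)^2 - b(P_\ell)^2,\ -\imath(a(P_\ell)^2 + b(P_\ell)^2),\ 2a(P_\ell)b(P_\ell)\bigr) = \varphi_\ell$ for $\ell = 0,1,2,3$ if and only if \eqref{eq1}--\eqref{eq3} hold. I treat the generic case in which the $\varphi_\ell$ are nonzero with pairwise distinct images on the isotropic conic $\Qq := \{[v_1:v_2:v_3] \in \PP^2 : v_1^2 + v_2^2 + v_3^2 = 0\} \cong \PP^1$; this is in any case forced by \eqref{eq3}, since the cross-ratio of the rectangle vertices equals $|R|^2/r_2^2 \notin \{0,1,\infty\}$, and the few remaining degenerate configurations can be discussed separately. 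Throughout, $\CR(\varphi_0,\dots,\varphi_3)$ is understood as the cross-ratio of the four points $[\varphi_\ell] \in \Qq$ under the identification $\Qq \cong \PP^1$.

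\emph{Necessity.} Condition \eqref{eq1} holds at every point, since $\Phi = ALA^c$ is vectorial and null as noted. For \eqref{eq2}, observe that $\Phi$ has degree $\le 2$ and that the vector $c = (|R|^2, R^2, -|R|^2, -R^2)$ annihilates $1$, $z$, $z^2$ at the rectangle vertices, i.e. $\sum_\ell c_\ell P_\ell^m = 0$ for $m = 0,1,2$; this is a short computation using $P_0 + P_2 = P_1 + P_3$ together with $P_1 - P_0 = r_1\zeta$, $P_3 - P_0 = \imath r_2\zeta$ for a unit $\zeta$, and the identities $R^2 - |R|^2 = 2\imath r_2 R$, $R^2 + |R|^2 = 2 r_1 R$. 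Consequently $\sum_\ell c_\ell\, \Phi(P_\ell) = 0$, which is exactly \eqref{eq2}. For \eqref{eq3}, the Gauss map $g = \Phi_3/(\Phi_1 - \imath\Phi_2) = -a/b$ is a M\"obius transformation of $\PP^1$ sending $P_\ell$ to the image of $\varphi_\ell$ on $\Qq$; since M\"obius maps preserve cross-ratios, $\CR(P_0,P_1,P_2,P_3) = \CR(\varphi_0,\varphi_1,\varphi_2,\varphi_3)$.

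\emph{Sufficiency.} Assume \eqref{eq1}--\eqref{eq3}. By \eqref{eq1}, for each $\ell$ choose $(a_\ell, b_\ell) \in \CC^2 \setminus \{0\}$ — unique up to a common sign — with $\varphi_\ell = (a_\ell^2 - b_\ell^2, -\imath(a_\ell^2 + b_\ell^2), 2a_\ell b_\ell)$, the standard rational parametrization of $\Qq$. By \eqref{eq3} there is a unique M\"obius transformation taking $P_\ell$ to $[a_\ell : b_\ell] \in \PP^1$ for all $\ell$; write it in homogeneous form as $[z:1] \mapsto [a_0(z):b_0(z)]$ with $a_0$, $b_0 \in \CC[z]$ coprime of degree $\le 1$, unique up to a common scalar. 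Then for each $\ell$ there is $\mu_\ell \in \CC^*$ with $a_0(P_\ell)^2 = \mu_\ell^2 a_\ell^2$, $b_0(P_\ell)^2 = \mu_\ell^2 b_\ell^2$ and $a_0(P_\ell)b_0(P_\ell) = \mu_\ell^2 a_\ell b_\ell$ (with $\mu_\ell^2$ well defined independently of the sign of $(a_\ell,b_\ell)$). The crux is that $\mu_\ell^2$ does not depend on $\ell$: since $a_0^2$, $a_0 b_0$, $b_0^2$ have degree $\le 2$, the computation from the necessity part gives $\sum_\ell c_\ell \mu_\ell^2 a_\ell^2 = \sum_\ell c_\ell a_0(P_\ell)^2 = 0$, and likewise with $b_\ell^2$ and with $a_\ell b_\ell$; whereas \eqref{eq2}, read through the parametrization, says precisely $\sum_\ell c_\ell a_\ell^2 = \sum_\ell c_\ell b_\ell^2 = \sum_\ell c_\ell a_\ell b_\ell = 0$. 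The vectors $(c_\ell a_\ell^2)_\ell$, $(c_\ell a_\ell b_\ell)_\ell$, $(c_\ell b_\ell^2)_\ell \in \CC^4$ are linearly independent — the $c_\ell$ are nonzero and no nonzero binary quadratic form vanishes at the four distinct points $[a_\ell:b_\ell]$ — so their orthogonal complement for the standard bilinear pairing on $\CC^4$ is one-dimensional and contains both $(1,1,1,1)$ and $(\mu_\ell^2)_\ell$; hence $\mu_\ell^2 \equiv \mu^2$ for a single $\mu \in \CC^*$. Now $a := \mu^{-1} a_0$, $b := \mu^{-1} b_0$ have degree $\le 1$ and satisfy $a(P_\ell)^2 = a_\ell^2$, $b(P_\ell)^2 = b_\ell^2$, $a(P_\ell)b(P_\ell) = a_\ell b_\ell$, so $\Phi(P_\ell) = \varphi_\ell$, and $A := ai + bk$ is the required degree-one polynomial.

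The step I expect to be the main obstacle is the sufficiency argument: recognising \eqref{eq2} as exactly the condition that makes the scalars $\mu_\ell^2$ constant (the rank/independence argument above), and pinning down the correct coordinate-free meaning of $\CR(\varphi_\ell)$ — the cross-ratio of the $\varphi_\ell$ as points of the conic $\Qq$ — so that \eqref{eq3} is equivalent to solvability of the M\"obius interpolation problem. Both rely on the rectangle's cross-ratio being non-degenerate, so the degenerate configurations (some $\varphi_\ell = 0$, or the images on $\Qq$ not all distinct) must be handled separately. The two remaining computations — that $\Phi = ALA^c$ is vectorial and null, and that $c = (|R|^2, R^2, -|R|^2, -R^2)$ kills quadratics at the rectangle vertices — are routine but should be carried out with care.
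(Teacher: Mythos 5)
Your proof is correct and follows the same overall architecture as the paper's: necessity of \eqref{eq1} from nullity of $ALA^c$, of \eqref{eq2} from a linear relation among the values at the rectangle vertices, and of \eqref{eq3} from cross-ratio preservation; then sufficiency by first solving the interpolation problem projectively and using \eqref{eq2} to normalize the resulting scalars. The differences are in the execution and are worth noting. You work throughout in the Weierstra\ss--Enneper coordinates $a = A_1 + \imath A_2$, $b = \imath A_0 + A_3$ (the same substitution the paper uses inside the proof of Theorem~\ref{th:AA}), whereas the paper stays quaternionic; and you obtain the linear relation \eqref{eq2} by the clean observation that the covector $(|R|^2, R^2, -|R|^2, -R^2)$ annihilates every polynomial of degree at most two at the vertices, rather than by the explicit change of variables and expansion of $\Phi(r_1)$, $\Phi(\imath r_2)$ in Lemma~\ref{lem:linear-relation} and Remark~\ref{changeofvar} --- your version is shorter and makes the degree-two nature of the constraint transparent. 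More importantly, in the sufficiency direction the paper asserts that the two relations $\sum_\ell c_\ell \varphi_\ell = 0$ and $\sum_\ell c_\ell \mu_\ell \varphi_\ell = 0$ together force $\mu_0 = \mu_1 = \mu_2 = \mu_3$ without justification; your rank argument (the three vectors $(c_\ell a_\ell^2)_\ell$, $(c_\ell a_\ell b_\ell)_\ell$, $(c_\ell b_\ell^2)_\ell$ are independent because no nonzero binary quadratic vanishes at four distinct points of $\PP^1$, so the annihilator is one-dimensional) supplies exactly the missing step, and your remark that \eqref{eq3} forces the $[\varphi_\ell]$ to be pairwise distinct --- since $\CR(P_0,P_1,P_2,P_3) = |R|^2/r_2^2 \notin \{0,1,\infty\}$ --- is what makes that independence automatic. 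The only loose end you correctly flag yourself is the degenerate case $\varphi_\ell = 0$, which the paper also leaves implicit.
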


Equation~\eqref{eq1} states that the vectors $\varphi_0$, $\varphi_1$, $\varphi_2$, and $\varphi_3$ define points on the projective conic $\mathcal{N}$ of vectorial quaternions of zero norm. The cross ratio on the left-hand side of \eqref{eq3} is the cross ratio of four complex numbers while the cross ratio on the right-hand side is the cross ratio of four points on the conic $\mathcal{N}$. The condition \eqref{eq2} is a typical constraint on the coefficients of the hodograph that arises in similar form also in the context of PH curves \cite[Proposition~21.1]{farouki08}.

We precede the proof of Theorem~\ref{maincub} by a remark that describes convenient coordinates for our investigation and a technical lemma.

\begin{remark}
  \label{changeofvar}
  Given a degree one polynomial $B(z)=(1-z)B_{0}+zB_{1}\in\HH_{\CC}[z]$ we want to find convenient coordinates to evaluate it at the vertices $P_{0}, P_{1}, P_{2}, P_{3}$ of a generic rectangle $\mathcal{R}$ of legs measuring $r_{1},r_{2}>0$. Set $R=r_{1}+\imath r_{2}\in\CC$. We may assume that
  \[
    P_{1}=P_{0}+r_{1}e^{\imath\theta},\quad P_{2}=P_{0}+Re^{\imath\theta},\quad P_{3}=P_{0}+\imath r_{2}e^{\imath\theta},
  \]
  for a suitable choice of an angle $\theta$.

  In order to center coordinates in $P_{0}$ and to align the legs of the rectangle to the coordinate axis, we perform the following change of coordinates:
  \[
    z\mapsto w(z)=e^{-\imath\theta}(z-P_{0}),
  \]
  whose inverse reads as $w\mapsto z(w)=e^{\imath\theta}w+P_{0}$. In the new coordinates, we have that
  \[
    w(P_{0})=0,\quad w(P_{1})=r_{1},\quad w(P_{2})=R,\quad w(P_{3})=\imath r_{2}.
  \]
  We then define the following polynomial:
  \[
    A(w)=B(z(w))=(1-P_{0})B_{0}+P_{0}B_{1}+we^{\imath\theta}(B_{1}-B_{0}),
  \]
  and of course $A(0)=B(P_{0}), A(r_{1})=B(P_{1}), A(R)=B(P_{2})$ and $ A(\imath r_{2})=B(P_{3})$.

  Having ``moved'' the rectangle vertices to $0, r_{1}, R,\imath r_{2}$, we now write $A$ in new coordinates as follows. We set
  \[
    \begin{cases}
      A_{0}=(1-P_{0})B_{0}+P_{0}B_{1},\\
      A_{1}=(1-P_{0}-Re^{\imath\theta})B_{0}+(P_{0}+Re^{\imath\theta})B_{1},
    \end{cases}
  \]
  so that
  \[
    A(w)=\frac{R-w}{R} A_{0}+\frac{w}{R}A_{1}.
  \]
  Thanks to the new coordinates of $A$, we have that $A(0)=A_{0}$ and $A(R)=A_{1}$. Notice that, if we start with $\Phi(z)=B(z)LB(z)^{c}$, then $\Phi(w)=A(w)LA(w)^{c}=\Phi(w(z))$.
\end{remark}

\begin{lemma}
  \label{lem:linear-relation}
  If $A(z) \in \HH_\CC[z]$ is of degree one and $P_0$, $P_1$, $P_2$, $P_3 \in
  \CC$ are the vertices of a rectangle (arranged in anticlockwise order) with
  legs $r_1$, $r_2 > 0$, then the values $\varphi_\ell \coloneqq
  A(P_\ell)LA^c(P_\ell)$, $\ell \in \{0,1,2,3\}$ satisfy the linear relation
  \eqref{eq2} with $R = r_1 + \imath r_2$.
\end{lemma}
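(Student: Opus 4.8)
The plan is to exploit the fact that, regarded as a function of the complex \emph{scalar} $P_\ell$, the value $\varphi_\ell = A(P_\ell)LA^c(P_\ell)$ is a quadratic polynomial whose coefficients lie in $\HH_\CC$ and do not depend on $\ell$, so that the desired relation \eqref{eq2} decouples into three purely scalar identities.

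Concretely, first I would write $A(z)=\alpha z+\beta$ with $\alpha$, $\beta\in\HH_\CC$. Since the conjugation $(\cdot)^c$ fixes complex scalars, $A^c(P_\ell)=\alpha^c P_\ell+\beta^c=(A(P_\ell))^c$, and expanding gives
\[
  \varphi_\ell=(\alpha P_\ell+\beta)L(\alpha^c P_\ell+\beta^c)=P_\ell^{2}\,U+P_\ell\,V+W,
\]
where $U\coloneqq \alpha L\alpha^c$, $V\coloneqq \alpha L\beta^c+\beta L\alpha^c$ and $W\coloneqq \beta L\beta^c$ are elements of $\HH_\CC$ independent of $\ell$. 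Writing $(c_0,c_1,c_2,c_3)=(|R|^{2},R^{2},-|R|^{2},-R^{2})$, the left-hand side of \eqref{eq2} becomes
\[
  \Bigl(\sum_\ell c_\ell P_\ell^{2}\Bigr)U+\Bigl(\sum_\ell c_\ell P_\ell\Bigr)V+\Bigl(\sum_\ell c_\ell\Bigr)W,
\]
so it suffices to prove the three scalar identities $\sum_\ell c_\ell P_\ell^{m}=0$ for $m=0,1,2$.

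Next I would invoke Remark~\ref{changeofvar}: the affine change of variables there carries $\mathcal R$ to the rectangle with vertices $0$, $r_1$, $R$, $\imath r_2$, leaves $R$ (hence the coefficients $c_\ell$) untouched, and transforms $A$ into another degree-one polynomial with $A(w)LA(w)^c=\Phi(w(z))$, so every $\varphi_\ell$ is unchanged. Hence I may assume $P_0=0$, $P_1=r_1$, $P_2=R=r_1+\imath r_2$, $P_3=\imath r_2$. Then $m=0$ is trivial; for $m=1$ the sum is $R^{2}r_1-|R|^{2}R-\imath r_2R^{2}=R^{2}\bar R-|R|^{2}R=0$ using $|R|^{2}=R\bar R$ and $\bar R=r_1-\imath r_2$; and for $m=2$ it is $R^{2}r_1^{2}-|R|^{2}R^{2}-(\imath r_2)^{2}R^{2}=R^{2}(r_1^{2}+r_2^{2})-|R|^{2}R^{2}=0$.

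I do not expect any real obstacle: the argument is a one-line expansion followed by three elementary scalar checks. The only points needing a moment's care are that $(\cdot)^c$ commutes past complex scalars — so that $\varphi_\ell$ is genuinely a \emph{scalar}-coefficient quadratic in $P_\ell$ — and that the normalization of Remark~\ref{changeofvar} preserves both $R$ and the values $\varphi_\ell$; alternatively one can bypass the remark and verify the three scalar identities directly from $P_1=P_0+r_1e^{\imath\theta}$, $P_2=P_0+Re^{\imath\theta}$, $P_3=P_0+\imath r_2e^{\imath\theta}$, where the $P_0$-dependent terms cancel for the same reason.
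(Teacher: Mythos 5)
Your proof is correct, and at its core it follows the same route as the paper: both arguments normalize the rectangle via Remark~\ref{changeofvar} to the vertices $0$, $r_1$, $R$, $\imath r_2$ and then exploit the fact that $\Phi = ALA^c$ is a quadratic polynomial in $z$ with coefficients in $\HH_\CC$. The difference is in how that quadratic is handled. The paper writes $A$ in the endpoint basis $A_0$, $A_1$, evaluates $\Phi$ at $r_1$ and $\imath r_2$, and eliminates the mixed term $A_{01} = A_0LA_1^c + A_1LA_0^c$ by subtracting the two resulting expressions (the system \eqref{system1}); you instead use the monomial basis $A(z)=\alpha z+\beta$ and reduce \eqref{eq2} to the three scalar moment identities $\sum_\ell c_\ell P_\ell^m = 0$ for $m=0,1,2$, with $(c_0,c_1,c_2,c_3)=(|R|^2,R^2,-|R|^2,-R^2)$. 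Your framing is slightly more transparent and more general: it makes explicit that the relation has nothing to do with the special form $ALA^c$ and holds for \emph{any} $\HH_\CC$-valued quadratic, and it also shows (via the $m=0,1$ identities) that the $P_0$- and $\theta$-dependent terms cancel automatically, so the normalization step can be bypassed entirely. The paper's version, by contrast, records the cross term $A_{01}$ explicitly, which is mildly informative but not needed for the lemma. All the individual steps you flag as needing care — that $(\cdot)^c$ fixes complex scalars so $A^c(P_\ell)=(A(P_\ell))^c$, and that the normalization preserves both $R$ and the values $\varphi_\ell$ — check out.
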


\begin{proof}
  As explained in Remark~\ref{changeofvar}, up to an affine change of variables, we may assume that $P_{0}=0, P_{1}=r_{1}, P_{2}=R, P_{3}= \imath r_{2}$ and that $A(z)=\bigl(\frac{R-z}{R}\bigr)A_{0}+\frac{z}{R}A_{1}$. Then we get
  \begin{equation*}
    \begin{cases}
      \Phi(r_1) = \varphi_1 = \frac{1}{R^2}\bigl(
          (\imath r_2)^2A_{0}LA_{0}^c +
          \imath r_1r_2(A_{0}LA_1^c +
          A_1LA_0^c)+r_1^2A_1LA_1^c \bigr),\\
      \Phi(\imath r_2) = \varphi_3 = \frac{1}{R^{2}}\bigl(
          r_1^2A_0LA_0^c +
          \imath r_1r_2(A_0LA_1^c+A_1LA_0^c) +
          (\imath r_2)^2A_1LA_1^c\bigr).
    \end{cases}
  \end{equation*}
  By setting $A_{01}=A_0LA_1^c+A_1LA_0^c$ and using the two equalities
  $\varphi_0=A_0LA_0^c$ and $\varphi_2=A_1LA_1^c$, the last system can also be written
  as
  \begin{equation}
    \label{system1}
    \begin{cases}
      \imath r_1r_2A_{01} = -(\imath r_2)^2\varphi_0 + R^2\varphi_1 - r_1^2\varphi_2,\\
      \imath r_1r_2A_{01} = -r_1^2\varphi_0 + R^2\varphi_3 - (\imath r_2)^2\varphi_2.
    \end{cases}
  \end{equation}
  The difference of these two equations equals
  \begin{equation*}
    -(\imath r_{2})^{2}\varphi_{0}+R^{2}\varphi_{1}-r_{1}^{2}\varphi_{2}=
    -r_{1}^{2}\varphi_{0}+R^{2}\varphi_{3}-(\imath r_{2})^{2}\varphi_{2},
  \end{equation*}
  which can also be written as~eqref{eq2}.
\end{proof}

\begin{proof}[Proof of Theorem~\ref{maincub}]
  Equation \eqref{eq1} is necessary because of $\Phi(z) \Phi(z)^c = 0$, Equation~\eqref{eq2} is necessary by Lemma~\ref{lem:linear-relation} and Equation~\ref{eq3} is necessary because $\Phi(z)$ is a birational parametrization of the conic $\mathcal{N}$ and thus preserves cross ratios.

  Assume now that Equations~\eqref{eq1}, \eqref{eq2}, and \eqref{eq3} are fulfilled. The first and the last equation imply existence of a birational parametrization $\Phi(z)$ of $\mathcal{N}$ such that the interpolation conditions are satisfied in \emph{projective sense,} that is, for $\ell \in \{0,1,2,3\}$ there exist $\mu_\ell \in \CC$ such that $\mu_\ell \Phi(P_\ell) = \varphi_\ell$. Moreover, by Theorem~\ref{th:AA} there exists $A(z) \in \HH_\CC(z)$ such that $\Phi(z) = A(z)LA^c(z)$. Because $\deg\Phi(z) = 2$, we have $\deg A(z) = 1$ and, invoking Lemma~\ref{lem:linear-relation} once more, to conclude
  \begin{equation*}
    |R|^{2}\mu_0\varphi_{0} +
    R^{2}\mu_1\varphi_{1} -
    |R|^{2}\mu_2\varphi_{2} -
    R^{2}\mu_3\varphi_{3}=0.
  \end{equation*}
  Together with \eqref{eq2} this implies $\mu_0 = \mu_1 = \mu_2 = \mu_3$ and, by suitably scaling $A(z)$ we obtain exact equality $\Phi(P_\ell) = \varphi_\ell$ for $\ell \in \{0,1,2,3\}$.
\end{proof}

As far as actual computations are concerned, we observe that
\begin{equation}
  \label{eq:null-cone-parametrization}
  N\colon \CC^2 \to \mathcal{N},\quad
  (s, t) \mapsto
  (s^2+t^2)i + \imath(s^2-t^2)j + 2\imath stk
\end{equation}
parametrizes $\mathcal{N}$. In order to produce suitable input $\varphi_\ell$, $\ell \in \{0,1,2,3\}$, for a given rectangle, we can select parameter values $(s_\ell,t_\ell)$ subject to
\begin{equation*}
  \CR(z_0,z_1,z_2,z_3) = \CR((s_0,t_0), (s_1,t_1), (s_2,t_2), (s_3,t_3)),
\end{equation*}
set $\varphi_\ell \coloneqq \nu_\ell N(s_\ell,t_\ell)$ and compute $\nu_\ell$ such that \eqref{eq2} is satisfied.

Now, Equation~\eqref{eq3} implies existence of projective map
\begin{equation}
  \label{eq:projective-map}
  f\colon \PP^1(\CC) \to \PP^1(\CC)
\end{equation}
such that
\begin{equation*}
  f([z_0]) = [s_0 + \imath t_0],\
  f([z_1]) = [s_1 + \imath t_1],\
  f([z_2]) = [s_2 + \imath t_2],\
  f([z_3]) = [s_3 + \imath t_3].
\end{equation*}
A straightforward computation shows that the map
\eqref{eq:null-cone-parametrization} can be generated by the polynomial $A(s,t)
= ti + s$ as $N(s,t) = A(s + \imath t)LA^c(s + \imath t)$. By plugging
\eqref{eq:projective-map} into $A(s + \imath t)$ we obtain a suitable
quaternionic preimage $A(z) = A(u + \imath v)$.

\begin{example}
  \label{ex:1}
  We select the rectangular parameter domain with vertices
  \begin{equation*}
    P_0 = 0,\quad
    P_1 = 1,\quad
    P_2 = 1 + 2\imath,\quad
    P_3 = 2\imath
  \end{equation*}
  and the null vectors
  \begin{equation*}
    \begin{aligned}
      \varphi_0 &= \nu_0 N(1,0) = \nu_0(i + \imath j),\\
      \varphi_1 &= \nu_1 N(\imath, 1) = \nu_1(-2(\imath j + k)),\\
      \varphi_2 &= \nu_2 N(1, 2) = \nu_2(5i - 3\imath j + 4\imath k)
    \end{aligned}
  \end{equation*}
  with yet to be determined $\nu_0$, $\nu_1$, $\nu_2 \in \CC$.
  The cross ratio condition
  \begin{equation*}
    \CR(0, 1, 1 + 2\imath, 2\imath) = \CR([1,0], [\imath, 1], [1,2], [s_3,t_3])
  \end{equation*}
  yields $[s_3,t_3]=[5-2\imath,8]$ and results in
  \begin{equation*}
    \varphi_3 = \nu_3((85-20\imath)i + (20-43\imath)j + (32+80\imath)k)
  \end{equation*}
  with $\nu_3 \in \CC$. Now we use the linear relation \eqref{eq2} to
  determine the coefficients $\nu_0$, $\nu_1$, $\nu_2$, and
  $\nu_3$. We have $R = 1 + 2\imath$ and hence $\vert R \vert = \sqrt{5}$.
  This gives the equation
  \begin{multline*}
    5(i+\imath j)\nu_0
    +(3-4\imath)(2\imath j+2k)\nu_1
    -5(5i-3\imath j+4\imath k)\nu_2\\
    +(3-4\imath)((85-20\imath)i+(20-43\imath)j+(32+80\imath)k)\nu_3 = 0
  \end{multline*}
  with solution (up to scalar multiples)
  \begin{equation*}
    \nu_0 = 25,\quad
    \nu_1 = -16,\quad
    \nu_2 = 12-16\imath,\quad
    \nu_3 = 1.
  \end{equation*}
  Hence,
  \begin{gather*}
    \varphi_0 = 25i + 25\imath j,\quad
    \varphi_1 = 32\imath j + 32 k,\\
    \varphi_2 = (60-80\imath)i - (48+36\imath)j + (64+48\imath)k,\\
    \varphi_3 = (85-20\imath)i + (20-43\imath)j + (32+80\imath)k.
  \end{gather*}
  The projective transformation mapping relating $z_\ell$ and $s_\ell + \imath
  t_\ell$ for $\ell \in \{0,1,2,3,\}$ is given as
  \begin{equation*}
    s = 25 - 5z,\quad
    t = -20\imath z.
  \end{equation*}
  The quaternionic preimage is $A(z) = -(5+20\imath i)z + 25$. Transforming to
  a quaternionic polynomial yields
  \begin{equation*}
    A(z) = -(5+20j)z + 25.
  \end{equation*}
  With this, we obtain the minimal isothermal Enneper surface patch
  \begin{multline*}
    X(u,v) = \Re \int A(u+\imath v) L A^c(u+\imath v) \\=
    \frac{25}{3}
    \begin{pmatrix}
      -15u^3+45uv^2-15u^2+15v^2+75u\\
      -51u^2v+17v^3+30uv-75v \\
      -8u^3+24uv^2+60u^2-60v^2
    \end{pmatrix},\quad
    (u,v) \in [0,1] \times [0,2].
  \end{multline*}
  It is displayed in Figure~\ref{fig:1}. The partial derivative vectors in the
  vertices (scaled with a factor of $10$) are shown as well.
\end{example}

\begin{figure}
  \centering
  \includegraphics[]{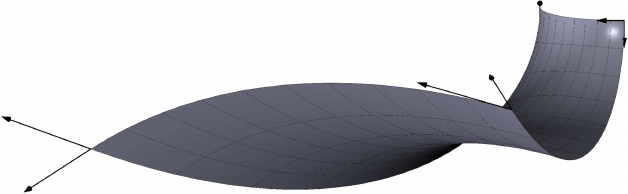}
  \caption{Enneper surface patch to Example~\ref{ex:1}.}
  \label{fig:1}
\end{figure}

\section{Outlook: Minimal Surfaces for CAGD}

Using the setup and insight gained in previous sections, using rational minimal surfaces in a CAGD context seems feasible. We believe that typical interpolation or approximations task can eventually be solved, possibly at the cost of high parametrization degrees. Challenges include control of corner points or boundary curves as well as shape control, quite similar to PH curves. We illustrate this at hand of a final example.

\begin{example}
	Notice that multiplying  an~isotropic complex curve $\Phi(z)$ by a complex function $\lambda(z)$ provides a new minimal surface with the same Gauss image. Indeed, recall that partial derivatives of $X(u,v)=\Re\int\lambda(z)\Phi(z)\dif  z$ are related to $\lambda(z)\Phi(z)$ via \eqref{eq:Rder} which directly implies
	
	\begin{equation*}
		\frac{\partial X}{\partial u}\times\frac{\partial X}{\partial v}=-\frac{\sqrt{-1}|\lambda|^2}{2}\left(\Phi\times\overline{\Phi}\right)
	\end{equation*}
	and we see that a function $\lambda$ affects only the magnitude of the~normal vector field. Nevertheless, one has to be cautious about the roots of $\lambda$ that correspond to the points on the surface, where the normal is not defined.  another way to see this is looking at the complex curve $\Psi(z)=\int\lambda(z)\Phi(z)\dif z$. If $\lambda(z_0)=0$ then $\Psi'(z_0)=0$ and the point is a local singularity on the curve. In the example depicted in Figure~\ref{fig:knots}, the~image of a small closed loop around $z_0$ is a knotted curve whose complexity depends on the multiplicity of the root.
	\begin{figure}
		\centering
		\includegraphics[width=0.28\textwidth]{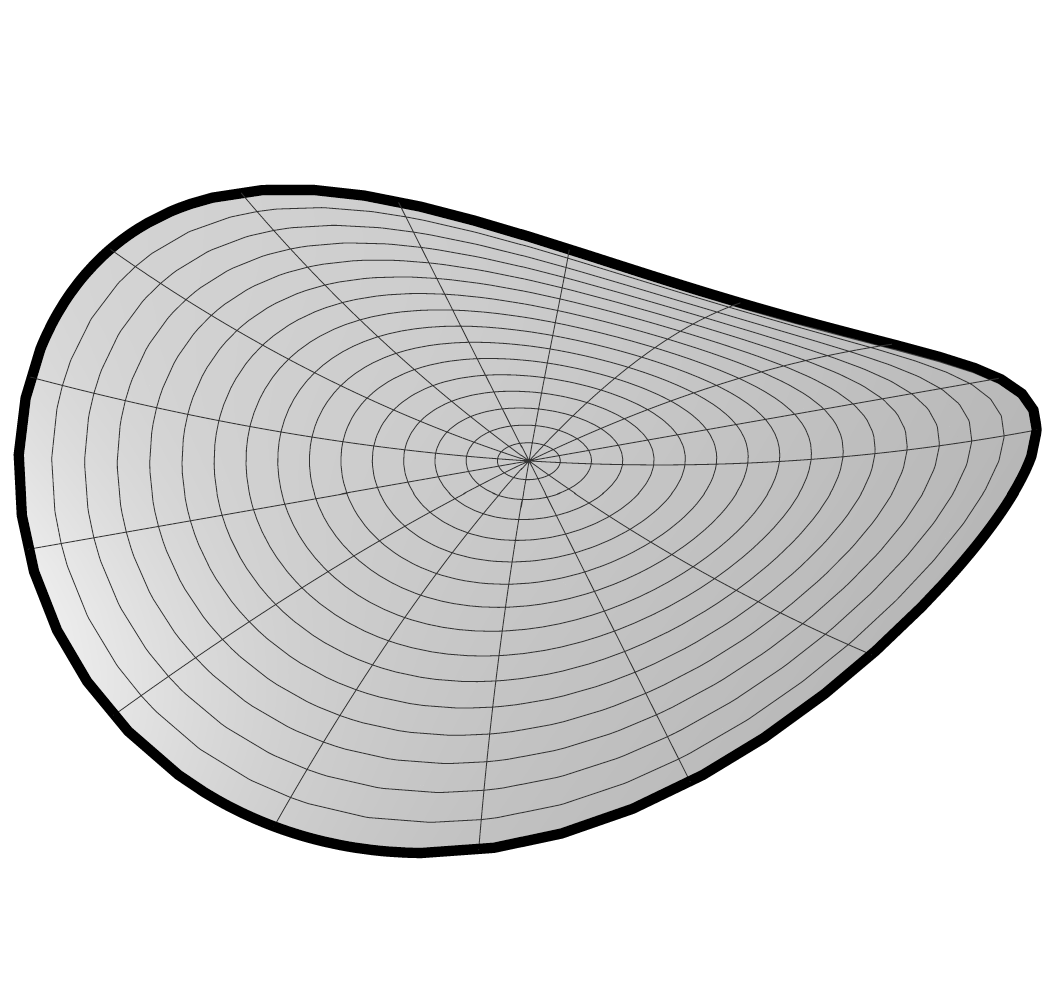}
		\hspace*{4ex}
		\includegraphics[width=0.28\textwidth]{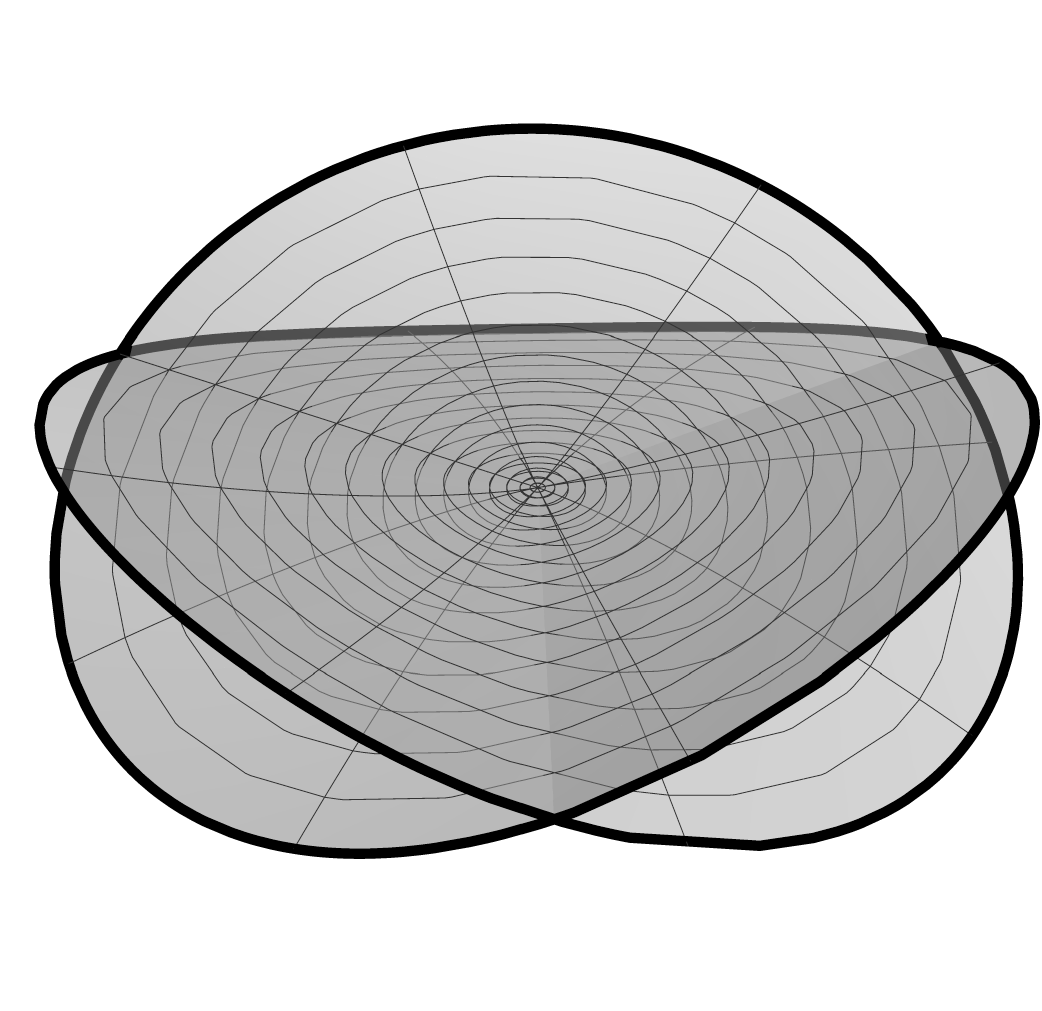}
		\hspace*{4ex}
		\includegraphics[width=0.28\textwidth]{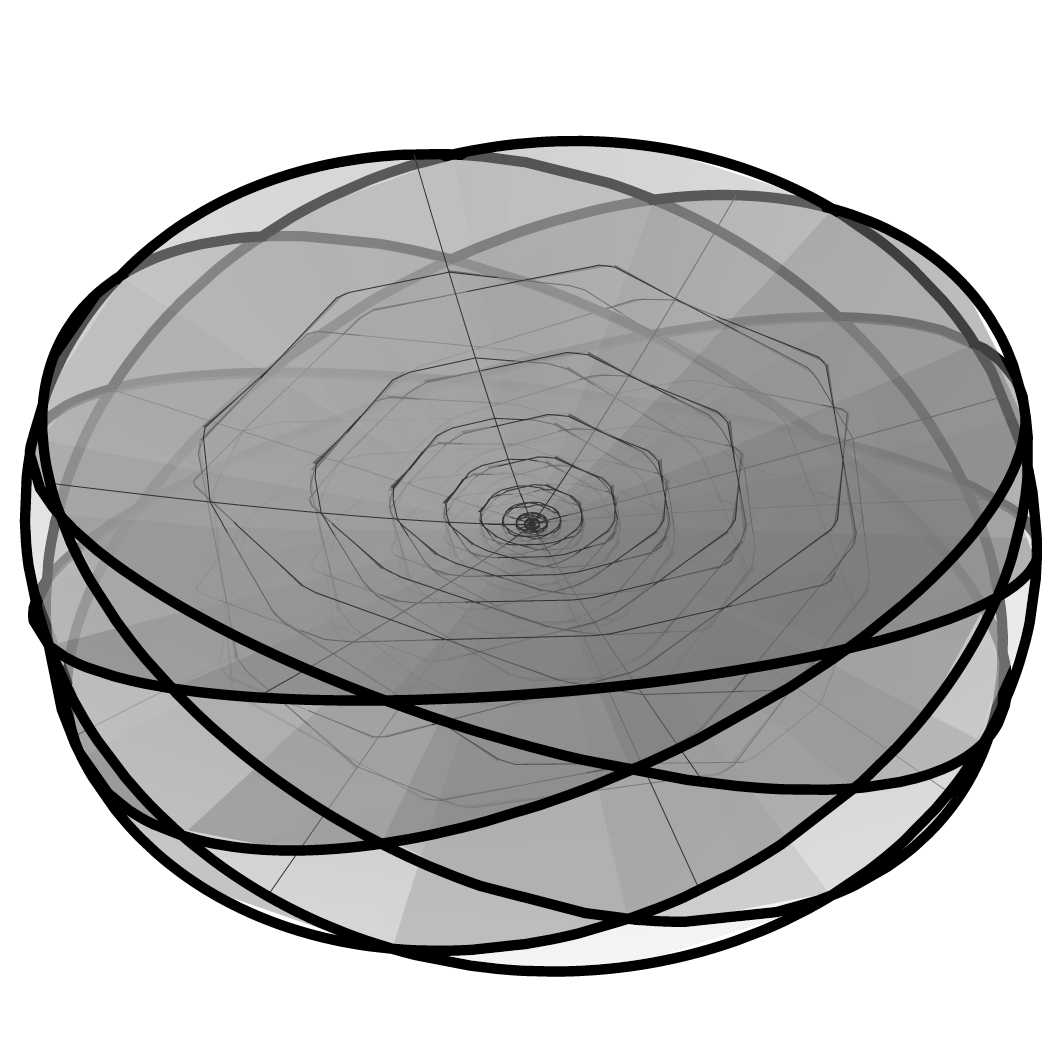}
		\caption{A local behavior of the minimal surface around the~root of $\lambda$ of multiplicity $0$, $1$ and $5$ respectively.}
		\label{fig:knots}
	\end{figure}
\end{example}

\bigbreak

\noindent{\bf Acknowledgments.}
This work stems from the collaboration between Amedeo Altavilla and Hans-Peter Schr\"ocker, who co-supervised Angela Teresa Masciale’s master's thesis. Masciale, supported by a scholarship, had the opportunity to conduct part of her research abroad, which led to Altavilla and Schr\"ocker meeting in response to her request.

The final part of Masciale’s thesis focused on parameterizations of surfaces that preserve the PH condition. An observation made in this section provided the foundation for some of the ideas developed in this work. For these reasons, the first and second author are grateful to Angela Teresa Masciale for her initiative and enthusiasm, which—directly or indirectly—contributed to making this research possible.

\bibliographystyle{amsplain}
\providecommand{\bysame}{\leavevmode\hbox to3em{\hrulefill}\thinspace}
\providecommand{\MR}{\relax\ifhmode\unskip\space\fi MR }
\providecommand{\MRhref}[2]{\href{http://www.ams.org/mathscinet-getitem?mr=#1}{#2}
}
\providecommand{\href}[2]{#2}

\end{document}